\theoremstyle{definition}
\newtheorem{defin}{Definition}[section]
\newtheorem{rem}[defin]{Remark}
\theoremstyle{plane}
\newtheorem{thm}[defin]{Theorem}
\newtheorem{prop}[defin]{Proposition}
\newtheorem{lemma}[defin]{Lemma}
\newcommand{\mbb}{\mathbb}
\newcommand{\mc}{\mathcal}
\newcommand{\veps}{\varepsilon}
\newcommand{\what}{\widehat}
\newcommand{\wtilde}{\widetilde}
\newcommand{\vphi}{\varphi}
\newcommand{\oline}{\overline}
\newcommand{\ra}{\rightarrow}
\newcommand{\vrho}{\varrho}
\newcommand{\R}{\mathbb{R}}
\newcommand{\C}{\mathbb{C}}
\newcommand{\N}{\mathbb{N}}
\newcommand{\z}{\zeta}
\renewcommand{\Re}{{\rm Re}\,}
\newcommand{\Id}{{\rm Id}\,}
\def\d{\partial}
\title{\large{\bfseries{\textsc{Some local questions for hyperbolic systems \\
with non-regular time dependent coefficients}}}}
\author{\normalsize\textsl{Francesco Fanelli} \vspace{.3cm} \\
{\small \textit{Institut Camille Jordan - UMR CNRS 5208}} \vspace{.1cm} \\
{\small \textsc{Universit\'e Claude Bernard -- Lyon 1}} \vspace{.1cm} \\
{\scriptsize {B\^atiment Braconnier}} \\
{\scriptsize {48, Boulevard du 11 novembre 1918}} \\
{\scriptsize {F-69622 Villeurbanne cedex -- FRANCE}} \vspace{.2cm} \\
{\small \ttfamily{fanelli@math.univ-lyon1.fr}} }
\date\today
\begin{document}

\maketitle

\subsubsection*{Abstract}
{\small In this note we investigate local properties for microlocally symmetrizable hyperbolic systems with just time dependent coefficients.
Thanks to Paley-Wiener theorem, we establish finite propagation speed by showing precise estimates on the evolution of the support of the solution in terms of suitable norms of the
coefficients of the operator and of the symmetrizer. From this result, local existence and uniqueness follow by quite standard methods.  

Our argument relies on the use of Fourier transform, and it cannot be extended to operators whose coefficients depend also on the space variables.
On the other hand, it works under very mild regularity assumptions on the coefficients of the operator and of the symmetrizer.}

\paragraph*{2010 Mathematics Subject Classification:}{\small 35L40, 
(primary); 35F35, 
35B65, 
35R05 
(secondary).}

\paragraph*{Keywords:}{\small hyperbolic systems; microlocal symmetrizability; finite speed of propagation; local existence and uniqueness;
energy estimates.}

\section{Introduction}

In the present paper, we deal with linear first order hyperbolic systems with time dependent coefficients:
\begin{equation} \label{intro_def:L}
Lu(t,x)\,=\,\d_tu(t,x)\,+\,\sum_{j=1}^nA_j(t)\,\d_ju(t,x)\,,
\end{equation}
where the time variable $t\in[0,T]$, for some $T>0$, and the space variable $x\in\Omega\subset\R^n$. Our main concern is to establish
finite propagation speed and other local properties for operator $L$, under minimal assumptions on the regularity of its coefficients and symmetrizer.

Several physical models can be described, from the mathematical viewpoint, by hyperbolic problems with variable, and especially non-smooth,
coefficients. It is well-known that this lack of regularity may affect the evolution of the system, producing pathological phenomena already at the linear level.

This is the case e.g. for scalar wave equations
\begin{equation} \label{intro_eq:wave}
 Wu(t,x)\,:=\,\d_t^2u(t,x)\,-\,\sum_{j,k=1}^n\d_j\Bigl(a_{jk}(t,x)\,\d_ku(t,x)\Bigr)\,,
\end{equation}
which were extensively studied so far. Among various papers, here we quote \cite{C-DG-S},
\cite{C-L}, \cite{C-M}, \cite{Tar}. All works highlighted that, under very low regularity assumptions on the coefficients of the operator,
the solution loses smoothness in the time evolution, and it becomes more and more irregular when the time goes by.
In particular, this phenomenon has drastic consequences in the ($L^2$ or $\mc{C}^\infty$) well-posedness of the Cauchy problem, which can be recovered only admitting a
loss of a finite number of derivatives. In addition, it makes impossible to capture observability and controllability properties of the operator in a classical sense,
and these notions have to be revisited in order to take into account the loss of regularity.
Finally, counterxemples show that, for too irregular coefficients, the loss of derivatives can be infinite, precluding any possibility of recovering well-posedness or observability,
even in a weak sense.

We refer to \cite{C-DS-F-M_tl}--\cite{C-DS-F-M_wp} (concerning the well-posedness theory) and \cite{F-Z}--\cite{Waters} (concerning observability and control of waves) for an
overview about this topic and for further references, as well as for some recent progress in these directions.

Moreover, we recall that similar phenomena were put in evidence also for transport equations; the literature on the subject is vast, and still evoloving. We refer
e.g. to Chapter 3 of \cite{B-C-D} and the references quoted therein for more informations about this topic.

\medbreak
As for first order hyperbolic systems
\begin{equation} \label{intro_def:P}
Pu(t,x)\,=\,\d_tu(t,x)\,+\,\sum_{j=1}^nA_j(t,x)\,\d_ju(t,x)\,,
\end{equation}
the analysis from the point of view of the minimal smoothness hypotheses is much more recent.
On the other hand, as it is well-known, this context presents some important differences with respect to the case of scalar wave equations.

The most relevant one is that one needs an extra hypothesis about \emph{microlocal symmetrizability} of the system.
Roughly speaking, this means that there exists a scalar product (say) $S$, depending both on $(t,x)$ and on the dual variable $\xi\neq0$,
with respect to which the operator becomes self-adjoint, and then ``classical'' energy estimates work.
For example, strictly hyperbolic systems, or more in general hyperbolic systems with constant multiplicities, are
microlocally symmetrizable (at least for regular coefficients, see also the discussion below), because they are smoothly diagonalizable.

The dependence of $S$ on $\xi$ is usually assumed to be very smooth (i.e. $\mc C^\infty$), while various regularity hypotheses can be considered with respect
to the time and space variables: accordingly, one speaks about bounded (or continuous, or Lipschitz, and so on) symmetrizer.

By a result of Ivri\u{\i} and Petkov (see \cite{Iv-Pet}), the existence of a bounded microlocal symmetrizer is a necessary condition for the Cauchy problem for $P$ to be
well-posed in $\mc{C}^\infty$. As for sufficient conditions, the question is still widely open; the main feature to point out is that
regularity hypotheses are needed \emph{both} on the coefficients of the operator in \eqref{intro_def:P} and on the symmetrizer $S$.

In \cite{M-2008} (see Chapter 7), M\'etivier proved $L^2$ well-posedness for $P$, if the $A_j$'s and the symmetrizer are supposed to be Lipschitz continuous
both in time and space. In particular, such a result extends Friedrichs theory for symmetric systems (see e.g. \cite{Fried_1954}--\cite{Fried_1958}),
which can be recovered in the special instance of $S$ independent of the dual variable $\xi$.
On the other hand, the same author showed in \cite{M_2014} that Lipschitz regularity of $S$ in all its variables (then also in $\xi$) is enough for $L^2$ well-posedness to hold,
if the coefficients of the operator enjoy additional smoothness (namely, $W^{2,\infty}$) in $(t,x)$.

Below the Lipschitz threshold, things go worse and we have to remark that a loss of derivatives produces during the time evolution of the system,
in a similar fashion to what is known for the wave operator \eqref{intro_eq:wave}. This allows one to
recover just $\mc{C}^\infty$ well-posedness, with a finite loss of derivatives: this is the case of work \cite{C-DS-F-M_LL-sys}, where the loss is proved
to be linearly increasing in time, for coefficients and symmetrizer which are log-Lipschitz continuous both in $t$ and $x$. Moreover, explicit counterexamples
establish the sharpness (if one measures regularity just by the modulus of continuity) of the Lipschitz and log-Lipschitz assumptions both on the
$A_j$'s and $S$, for the $L^2$ and the $\mc C^\infty$ well-posedness respectively. These counterexamples are constructed in \cite{M_2014} and in \cite{C-M_2015};
in this last paper, just time-dependent coefficients are considered, like in the case of operator $L$ defined in \eqref{intro_def:L} at the beginning of the introduction.

Still for the case of operator $L$, in \cite{C-DS-F-M_Z-sys} we considered Zygmund and log-Zygmund type assumptions.  For reasons which will appear clear in a while,
we dismissed the hypothesis of microlocal symmetrizability of the system, and we supposed $L$ to be hyperbolic with constant multiplicities.

Zygmund hypotheses are somehow second order conditions, made on the symmetric difference of the function rather than on its modulus of continuity, and they are weaker
than the corresponding ones formulated on the first difference. The special issue is that they are still suitable for well-posedness of hyperbolic Cauchy problems, with the same kind
of loss which would pertain to the latter conditions: so, for pure Zygmund hypothesis one recovers well-posedness in any $H^s$ with no loss of derivatives, while log-Zygmund hypothesis
entails a finite loss, which depends on time.
This picture is very similar to what was already obtained in the case of scalar wave equations \eqref{intro_eq:wave} by Tarama, see paper \cite{Tar} (see also \cite{C-DS-F-M_tl}
and \cite{C-DS-F-M_wp} and the references therein). There, the main point was to compensate the worse behaviour of the coefficients by introducing a lower order corrector
in the definition of the energy, in order to produce special algebraic cancellations in the energy estimates.

For first order systems, the strategy adopted in \cite{C-DS-F-M_Z-sys} was the same: indeed, knowing \textsl{a priori} the existence of a microlocal symmetrizer is out of use in this context,
and the main challange was to build up a suitable microlocal symmetrizer, in a slightly weaker sense. In particular, the construction is just local in $(t,\xi)$, and one has
to forget about global regularity in $\xi$. Such a fact makes some points unclear, or at least less straightforward, in the analysis of
first order hyperbolic systems, when assumptions of Zygmund type are formulated. We will come back to this issue later on.

\medbreak
Among all the above mentioned works, just a few devoted attention to finite propagation speed and other local questions.
In order to establish these results, a classical approach is based on convexifications arguments, combined with a global $L^2$ well-posedness result.

This strategy was adopted in \cite{M_2014} for operator \eqref{intro_def:P} with Lipschitz coefficients and symmetrizer. A crucial point, there, was to show that
the assumed hypotheses are invariant by change of variables, and in particular of the timelike direction.
Then, finite speed of propagation and local uniqueness follow; in addition, the previous property allows for a sharp description of propagation of the supports,
like in \cite{J-M-R_2005} and \cite{Rauch_2005}. Remark that the results in these last papers still require some regularity assumptions on the coefficients, besides
asking \textsl{a priori} a local uniqueness property at any spacelike hypersurface.

The same scheme is followed also in \cite{C-DS-F-M_LL-sys} for operator $P$ with log-Lipschitz coefficients, which admits a symmetrizer which is log-Lipschitz in $(t,x)$, smooth
in $\xi\neq0$. Regularity in $\xi$ is a key ingredient, since the well-posedness issue (with a finite loss of derivatives) exploits paradifferential calculus
in a fundamental way. Moreover, there many efforts are devoted to giving sense to the local Cauchy problem, due to the loss of derivatives
in the energy estimates (a difficulty which was already encountered in \cite{C-M} for wave equations).

Nonetheless, one has to remark that, due to the hypotheses on the regularity of coefficients and symmetrizers, the previous results do not apply to
all the possible cases: in particular, the instance of operator \eqref{intro_def:L} with Zygmund and log-Zygmund coefficients is not covered.
Moreover, it is important to point out that in \cite{C-DG-S}, for the scalar wave operator $W$ with just time dependent coefficients,
local questions were investigated supposing very weak regularity assumptions, i.e. essentially $a_{jk}\in L^1\bigl([0,T]\bigr)$ only.

Motivated by the previous considerations, in the present paper we focus our attention on the case of operator $L$ defined in \eqref{intro_def:L}: we want to address local questions
under the angle of minimal regularity assumptions for the coefficients of the operator and its symmetrizer.
Referring to previous works, like e.g. the above mentioned \cite{M_2014}--\cite{C-DS-F-M_LL-sys}, we have to say that our analysis applies to
operators with just time dependent coefficients, and hence, in particular, we also lose the invariance by change of coordinates.
On the other hand, our main result, on finite propagation speed, have the avantage of asking for very low regularity hypotheses, and to go below the Lipschitz
(or log-Lipschitz) threshold.

Our method of investigation is strongly insipired by the analysis of \cite{C-DG-S} for the scalar operator \eqref{intro_eq:wave}. It is based on fine estimates on the growth of the
solution in the Fourier variable and on the application of Paley-Wiener theorem. We claim no special originality about the techniques of the proof.
As said above, the interest of our study is the very mild smoothness which it requires: basically,
either continuity in time of the $A_j$'s, or global continuity of $S$ on $[0,T]\times\mbb{S}^{n-1}_\xi$, where $\mbb{S}^{n-1}_\xi$ denotes
the unitary sphere in the $\xi$-variable. We refer to Subsection \ref{ss:def-hyp} and Theorem \ref{th:speed} for the precise formulation of
our hypotheses.

The point is that we will be content to estimate the evolution of the support of solutions in a larger functional framework, namely in the space of analytic functionals.
Thanks to this, finite speed of propagation is proved for a wide class of data (see Theorem \ref{th:speed}). After that, we will establish a local theory in the spaces of analytic
functionals and functions (see Section \ref{s:local}). In order to recover corresponding results in finer functional settings, more regularity has to be imposed, of course:
we will not detail this part here, since it would present no special novelties compared to the analysis of \cite{C-DG-S}.

We remark that our hypotheses are still stronger than the ones in \cite{C-DG-S} for wave equations, but, as already pointed out, the case of first order
systems and of wave equations are very different from each other. On the other hand, in analogy with \cite{C-DG-S}, we will see that it is still the $L^1\bigl([0,T]\bigr)$-norm
of the coefficients which enters into play in the propagation of the support of the initial data.

Finally, we point out that the analysis presented here does not apply straight away to operators $L$ with coefficients in Zygmund classes. As a matter of fact, it is true that this
regularity in time is much more than the one demanded in the present paper. But in the Zygmund case one misses a unique global symmetrizer, for which, then, smoothness in $\xi$
fails to hold; in fact, we do not even have a true symmetrizer for our system, but just a family of approximate symmetrizers.
For this reason, the adaptation of the present analysis to operators \eqref{intro_def:L} with Zygmund coefficients would require some other modifications, which
go beyond the scopes of the present paper: we will devote to it a different study. 

\medbreak
Before going on, we give here a brief overview of the paper. 

In Section \ref{s:prelim} we present our general working setting and hypotheses; we also state some preliminaries and well-known results,
which we will exploit in a fundamental way in our study.
Section \ref{s:speed} deals with finite propagation speeed property in the class of analytic functionals; namely, we will state and
prove the main result of the paper (see Theorem \ref{th:speed}). Thanks to this outcome, in Section \ref{s:local} we will develop
a local theory in the classes of analytic functionals and functions.

\subsubsection*{Acknowledgements}

The author whishes to express all his gratitude to T. Alazard, whose relevant question motivated the study presented in this note.

The author is member of the Gruppo Nazionale per l'Analisi Matematica, la Probabilit\`a
e le loro Applicazioni (GNAMPA) of the Istituto Nazionale di Alta Matematica (INdAM).

\subsubsection*{Notations}
We introduce here some notations which will be freely used in the text.

Let the field $\mbb{F}$ be $\R$ or $\C$, and fix a number $n\geq1$.
Let $u$ be a function defined on $\mbb{F}^n$: the variable, if real, will be usually called $x$, while $z$ if complex. Correspondingly, we denote by $\xi\in\R^n$
or $\z\in\C^n$ the respective dual variables. We will also use the decompositions $z=x+iy$ and $\z=\xi+i\eta$, with both $(x,\xi)$ and $(y,\eta)$ in $\R^{2n}$.

Given two vectors $v$ and $w$ in $\C^m$, we will denote by $v\cdot w$ the usual scalar product in $\C^m$ and
by $|v|$ the usual norm of a vector in $\C^m$: namely,
$$
v\,\cdot\,w\,=\,\sum_{j=1}^m v_j\,\oline{w_j}\qquad\mbox{ and }\qquad
|v|^2\,=\,v\,\cdot\,v\,.
$$
On the contrary, given an infinite-dimensional Banach space $X$, we denote by $\|\cdot\|_{X}$ its norm and,
if $X$ is Hilbert, by $(\,\cdot\,,\,\cdot\,)_{X}$ its scalar product. 

The symbol $\mc{M}_m(\mbb{F})$ refers to the set of all $m\times m$ matrices whose components belong to $\mbb{F}$, equipped with the norm
$|\,\cdot\,|_{\mc{M}}$ defined by
$$
|A|_{\mc{M}}\,:=\,\sup_{|v|=1}|Av|\,\equiv\,\sup_{v\neq0}\frac{|Av|}{|v|}\,.
$$
If $A$ is self-adjoint (more in general, if it is a normal matrix), we also have
$$
|A|_{\mc{M}}\,\equiv\,\sup_{|v|=1}|Av\cdot v|
$$

Finally, given two self-adjoint matrices $A$ and $B$ belonging to $\mc{M}_m(\mbb{C})$, we say that $A\leq B$ if the inequality $Av\cdot v\,\leq\,Bv\cdot v$
holds true for all $v\in\C^m$.

\section{Preliminaries} \label{s:prelim}
In this section, we present our work setting and we recall some basic results which we will need in the course of our study. 

\subsection{Basic definitions and hypotheses} \label{ss:def-hyp}
On an infinite strip $[0,T]\times\R^n$, for some time $T>0$ and $n\geq1$, we consider
the $m\times m$ (with $m\geq1$) linear first order system
\begin{equation} \label{def:Lu}
Lu(t,x)\,=\,\d_tu(t,x)\,+\,\sum_{j=1}^nA_j(t)\,\d_ju(t,x)\,.
\end{equation}
Namely, we suppose $u(t,x)\in\R^m$ and, for all $1\leq j\leq n$, the matrices $A_j(t)\in\mc{M}_m(\R)$.

We immediately introduce the symbol $A$ associated to the operator $L$: for all $(t,\xi)\in[0,T]\times\R^n$,
\begin{equation} \label{def:symbol}
A(t,\xi)\,:=\,\sum_{j=1}^n\xi^j\,A_j(t)\,.
\end{equation}
Then, for all $(t,\xi)$, the matrix $A(t,\xi)$ belongs to $\mc{M}_m(\R)$.
Let $\bigl(\lambda_j(t,\xi)\bigr)_{1\leq j\leq m}\subset\C$ denote the family of its eigenvalues at any point $(t,\xi)$.

We recall the following definitions (see e.g. \cite{M-2008}, Chapter 2).
\begin{defin} \label{d:systems}
\begin{itemize}
\item[(i)] ~We say that system \eqref{def:Lu} is \emph{hyperbolic} if, for all $t\in[0,T]$ and all $\xi\neq0$,
the eigenvalues of $A(t,\xi)$ are all real: $\bigl(\lambda_j\bigr)_{1\leq j\leq m}\,\subset\,\R$.

\item[(ii)] ~System \eqref{def:Lu} is \emph{hyperbolic with constant multiplicities} if, for all $t\in[0,T]$ and all
$\xi\neq0$, the eigenvalues of $A(t,\xi)$ are real and semi-simple, with constant multiplicities.
\end{itemize}
\end{defin}

We recall that a (possibly complex) eigenvalue is called \emph{semi-simple} if its algebraic and geometric multiplicities coincide;
a matrix is semi-simple if it is diagonalizable in the complex sense. 

A particular case of hyperbolicity with constant multiplicities is when the operator is \emph{strictly hyperbolic}, i.e. when all the 
eigenvalues are real and distinct (constant multiplicites equal to $1$).

\medbreak
In what follows, we are going to consider a more general assumption than hyperbolicity with constant multiplicities.
More precisely, we require that the system is \emph{uniformly microlocally symmetrizable} in the sense of M\'etivier (see \cite{M-2008},
Chapter 7). The word \textit{uniformly} here refers to $(t,x)\in[0,T]\times\R^n$ (see also Section 4 of \cite{M_2014}).
\begin{defin} \label{d:micro_symm}
Operator $L$, defined in \eqref{def:Lu}, is a \emph{uniformly microlocally symmetrizable hyperbolic system} if there exists a $m\times m$ matrix $S(t,\xi)$,
 homogeneous of degree $0$ in $\xi$, such that:
 \begin{itemize}
  \item for almost every $t$, the map $\xi\,\mapsto\,S(t,\xi)$ is $\mc{C}^\infty$ for $\xi\neq0$;
  \item for any point $(t,\xi)$, the matrix $S(t,\xi)$ is self-adjoint;
  \item there exist constants $0<\lambda\leq\Lambda$ such that $\lambda\,\Id\,\leq\,S(t,\xi)\,\leq\,\Lambda\,\Id$ for any $(t,\xi)$;
  \item for any point $(t,\xi)$, the matrix $S(t,\xi)\,A(t,\xi)$ is self-adjoint.
 \end{itemize}
The matrix valued function $S(t,\xi)$ is called a \emph{(bounded) microlocal symmetrizer} for system \eqref{def:Lu}.
\end{defin}

Remark that, by definition, $S(t,\nu)$ is a smooth, and in particular continuous, function of $\nu\in\mbb{S}^{n-1}$.
No additional regularity is demanded on the time variable at this level: one just requires an $L^\infty$ bound.

Now, we turn our attention to the coefficients of $L$: we suppose that, for all $1\leq j\leq n$,
the matrix-valued functions $A_j$ are $L^1$ in the time variable. More precisely, we require that
\begin{equation} \label{hyp:coeff-L1}
\sum_{j=1}^n\bigl\|A_j\bigr\|_{L^1([0,T];\mc{M}_m(\R))}\,=\,\sum_{j=1}^n\int_0^T\bigl|A_j(t)\bigr|_{\mc{M}}\,dt\,<\,+\infty\,.
\end{equation}
Of course, since we are in finite dimension, this is equivalent to ask a similar property on each component of the matrices $A_j$.

We conclude this part by pointing out a relevant fact about microlocal symmetrizability. It will be useful especially in Section \ref{s:local}
below. It is just an easy remark, but we were not able to find any reference about this property: it seems important to us to 
clearly state it.
\begin{lemma} \label{l:symm}
Let the operator $L$ be defined as in \eqref{def:Lu}. Then $L$ is microlocally symmetrizable, with bounded
symmetrizer $S$, if and only if the adjoint operator
\begin{equation} \label{def:L*}
L^*v(t,x)\,=\,-\,\d_tv(t,x)\,-\,\sum_{j=1}^nA^*_j(t)\,\d_jv(t,x)
\end{equation}
is microlocally symmetrizable, with symmetrizer $S^{-1}$.
\end{lemma}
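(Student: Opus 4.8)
The plan is to verify the four defining properties of a microlocal symmetrizer from Definition~\ref{d:micro_symm} for the candidate matrix $S^{-1}(t,\xi)$, with respect to the adjoint operator $L^*$. First I would identify the symbol of $L^*$: since $L^*v = -\d_t v - \sum_j A_j^*(t)\,\d_j v$, its principal symbol (in the sense of \eqref{def:symbol}) is $A^*(t,\xi) = \sum_j \xi^j A_j^*(t)$, which is exactly the adjoint of the matrix $A(t,\xi)$ associated to $L$ (the overall sign on $\d_t$ only fixes the time orientation and does not affect microlocal symmetrizability, which is a condition on the spatial symbol). So the statement reduces to: $S$ symmetrizes $A$ in the sense of M\'etivier $\iff$ $S^{-1}$ symmetrizes $A^*$.

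The first three bullets are immediate and I would dispatch them quickly. Smoothness in $\xi\neq0$ for a.e.\ $t$: matrix inversion is a smooth operation on $GL_m(\C)$, and the uniform bounds below guarantee $S(t,\xi)$ is invertible everywhere, so $\xi\mapsto S^{-1}(t,\xi)$ is $\mc{C}^\infty$ wherever $\xi\mapsto S(t,\xi)$ is. Homogeneity of degree $0$ in $\xi$ is preserved under inversion. Self-adjointness: if $S=S^*$ then $(S^{-1})^* = (S^*)^{-1} = S^{-1}$. The spectral bounds: from $\lambda\,\Id \leq S \leq \Lambda\,\Id$ with $S$ self-adjoint, passing to inverses reverses inequalities for positive definite matrices, giving $\Lambda^{-1}\,\Id \leq S^{-1} \leq \lambda^{-1}\,\Id$, so $S^{-1}$ satisfies the same type of two-sided bound with constants $\Lambda^{-1}$ and $\lambda^{-1}$.

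The only point requiring a small computation is the fourth bullet: I must show $S^{-1}(t,\xi)\,A^*(t,\xi)$ is self-adjoint, knowing $S(t,\xi)\,A(t,\xi)$ is self-adjoint. Write $M := SA$, so $M = M^*$, i.e.\ $SA = A^*S$. Multiply on the left and right by $S^{-1}$: $A S^{-1} = S^{-1} A^*$, which says $S^{-1}A^* = A S^{-1} = (S^{-1})^* A^*{}^{**}\dots$ — more cleanly, $(S^{-1}A^*)^* = A S^{-1} = S^{-1}A^*$ using $AS^{-1} = S^{-1}A^*$ just derived and the self-adjointness of $S^{-1}$. Hence $S^{-1}A^*$ is self-adjoint, as required. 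The converse direction is entirely symmetric, since $(S^{-1})^{-1} = S$ and $(A^*)^* = A$, so applying the same argument to $L^*$ and $S^{-1}$ recovers the hypotheses on $L$ and $S$.

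I do not expect any genuine obstacle here; this is the kind of ``easy remark'' the authors flag as not appearing in the literature. The only things to be careful about are: (a) keeping track of the fact that the sign flip on $\d_t$ in \eqref{def:L*} is harmless for the symmetrizability condition, which concerns the spatial part $A(t,\xi)$ only; (b) noting that invertibility of $S(t,\xi)$ at \emph{every} point $(t,\xi)$ — not just a.e.\ — follows from the lower bound $\lambda\,\Id \leq S$, so $S^{-1}$ is genuinely well-defined as a matrix-valued function with the same bounds, and the a.e.\ caveat enters only in the smoothness-in-$\xi$ statement, exactly as in Definition~\ref{d:micro_symm}. With these observations in place the proof is a three-line verification per bullet.
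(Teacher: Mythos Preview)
Your proposal is correct and follows essentially the same approach as the paper: identify the symbol of $L^*$ as $A^*(t,\xi)$, note that the first three properties of Definition~\ref{d:micro_symm} for $S^{-1}$ are immediate (with constants $\Lambda^{-1}\leq\lambda^{-1}$), and for the fourth rewrite $(SA)^*=SA$ as $A^*=SAS^{-1}$, equivalently $S^{-1}A^*=AS^{-1}=(S^{-1}A^*)^*$. The paper's proof is slightly terser on the first three bullets but the key algebraic manipulation is identical to yours.
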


\begin{proof}
We start by noticing that, up to a minus sign, the symbol of $L^*$ is the matrix
$$
A^*(t,\xi)\,:=\,\sum_{j=1}^n\xi^j\,A^*_j(t)\,.
$$

On the other hand, it is straightforward to check that also $S^{-1}$ fulfills all the properties of Definition \ref{d:micro_symm}
(with constants $\Lambda^{-1}\leq\lambda^{-1}$), up to the last one.
Hence, we have only to verify that $S^{-1}\,A^*$ is self-adjoint.

Now, we remark that we can express the condition $(S\,A)^*\,=\,S\,A$ in the equivalent form $A^*\,=\,S\,A\,S^{-1}$. But one easily has
$$
A^*\,=\,S\,A\,S^{-1}\qquad\Longleftrightarrow\qquad A\,=\,S^{-1}\,A^*\,S\,,
$$
which again is the same as writing $S^{-1}\,A^*\,=\,A\,S^{-1}$. Observing that $A\,S^{-1}\,=\,(S^{-1}\,A^*)^*$
completes the proof of the lemma.
\end{proof}

\begin{rem} \label{r:L*}
The previous result holds true even when the coefficients of $L$ depend both on time and space variables, and the adjoint operator becomes
$$
L^*v(t,x)\,=\,-\,\d_tv(t,x)\,-\,\sum_{j=1}^n\d_j\bigl(A^*_j(t,x)\,\d_jv(t,x)\bigr)\,.
$$
\end{rem}

\subsection{Tools}

We collect here some fundamental results, which will reveal to be fundamental in the sequel.

First of all, let us introduce additional notations for some relevant functional spaces.
Let $\Omega$ be an open subset of $\R^n$; then:
\begin{itemize}
 \item $\mc{H}$ denotes the space of entire functions on $\R^n$;
 \item $\mc{A}(\Omega)$ denotes the space of real analytic functions on $\Omega$;
 \item $\mc{H}'$ is the space of holomorphic functionals over $\C^n$;
 \item $\mc{A}'(\Omega)$ is the space of real analytic functionals over $\Omega$.
\end{itemize}
When $\Omega=\R^n$, we will omit it from the notation. In addition, we will indicate by ${\rm supp}\,u$ the support of a functional $u\in\mc{A}'(\Omega)$.

We recall that, given a compact set $K\subset\Omega$, we say that ${\rm supp}\,u\subset K$ if, for all open neighborhood $U\subset\Omega$ of $K$
and all sequence $(f^k)_k\subset\mc{A}(\Omega)$ such that $f^k\ra0$ in $\mc A(U)$, then one has $<u,f^k>\,\longrightarrow\,0$.

The symbol $<\cdot,\cdot>_{\mc{H}'\times\mc{H}}$ refers to the duality product for the space $\mc{H}'\times\mc{H}$, and an analogous notation will be
used for $\mc{A}'(\Omega)\times\mc{A}(\Omega)$. The spaces in the subscript will be omitted whenever they are evident from the context.

For a functional $u\in\mc{H}'$, we denote by $\what u$ its Fourier transform: for all $(z,\z)\in\C^{2n}$, one has
$$
\what u(\z)\,:=\,<u\,,\,h_\z>\,,\qquad\qquad\mbox{ with }\qquad h_\z(z):=\exp\bigl(-i\,\z\cdot z\bigr)\,.
$$
For $u\in L^1\bigl([0,T];\mc{H}'\bigr)$, we naturally write $\what u(t,\z)\,=\,<u(t)\,,\,h_\z>$ for almost every $t\in[0,T]$.

We recall that a real analytic functional $u$ is \emph{real-valued} if, for any real analytic function $\vphi$ over $\R^n$, the quantity
$<u\,,\,\vphi>\,\in\R$. Whenever $u$ is compactly supported, this is equivalent to require that $\what u(-\xi)=\oline{\what u(\xi)}$ for all
$\xi\in\R^n$.

Fixed some $r>0$, we also set $B(r)$ to be the ball of $\R^n$ centered in the origin and of radius $r$.

\medbreak
This having been pointed out, let us recall the Paley-Wiener Theorem: the present form is the one stated in \cite{C-DG-S}, Section 1.
\begin{thm} \label{th:PW}
(I) Let $u\in\mc{H}'$ and $r>0$. Then $u\in\mc{A}'$, with ${\rm supp}\,u\,\subset\,B(r)$, if and only if the following condition is verified: for all
$\delta>0$, there exists a constant $C_\delta>0$ such that
$$
\left|\what{u}(\z)\right|\,\leq\,C_\delta\,\exp\bigl(\delta\,|\xi|\,+\,(r+\delta)\,|\eta|\bigr)
$$
for all $\z=\xi+i\eta\,\in\,\C^n$, with $|\z|\geq1$.

(II) Let $u\in L^1\bigl([0,T];\mc{A}'\bigr)$, with support contained in the ball $B(r)$. Then,  for all
$\delta>0$, there exists a constant $C_\delta>0$ such that
$$
\int_0^T\left|\what{u}(t,\z)\right|\,dt\,\leq\,C_\delta\,\exp\bigl(\delta\,|\xi|\,+\,(r+\delta)\,|\eta|\bigr)
$$
for all $\z=\xi+i\eta\,\in\,\C^n$, with $|\z|\geq1$.
\end{thm}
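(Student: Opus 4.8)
The plan is to prove the equivalence in~(I) by treating the two implications separately, the necessity one being elementary (and also yielding~(II), once made uniform in~$t$), while its converse is the substantial part. For \emph{necessity} in~(I): assume $u\in\mc A'$ with $\mathrm{supp}\,u$ contained in a compact $K\subset B(r)$. By the very definition of the support recalled above, $u$ extends to a continuous linear functional on $\mc A(U)$ for every open neighbourhood $U$ of~$K$; choosing $U$ inside the $\delta/2$-neighbourhood of $K$ and unravelling the inductive-limit topology of $\mc A(U)$, one gets a compact set contained in a complex $\delta/2$-neighbourhood of~$K$ together with a constant $C_\delta>0$ such that $|<u,f>|\le C_\delta\sup|f|$ over that set, for every $f$ holomorphic near it. Applying this with $f=h_\z$, and noting that on $\{z\in\C^n:\ |\Re z|\le r+\delta/2,\ |\Im z|\le\delta/2\}$ one has directly from the definition $|h_\z(z)|\le\exp\bigl((r+\delta/2)|\eta|+(\delta/2)|\xi|\bigr)$, we obtain $|\what u(\z)|\le C_\delta\exp\bigl(\delta|\xi|+(r+\delta)|\eta|\bigr)$ for every $\z$, in particular for $|\z|\ge1$ (the argument is insensitive to whether the support lies in the open or the closed ball, which is exactly why the $r+\delta$ with $\delta>0$ arbitrary keeps the statement consistent). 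For~(II) one repeats this uniformly in~$t$: since $t\mapsto u(t)$ is $L^1$ with values in the space of analytic functionals carried by $\oline{B(r)}$, it is $L^1$ with values in $\bigl(\mc O(\omega)\bigr)'$ for a fixed complex neighbourhood $\omega\subset\{|\Re z|<r+\delta,\ |\Im z|<\delta\}$ of $\oline{B(r)}$, and then $\int_0^T|\what u(t,\z)|\,dt\le\bigl(\int_0^T\|u(t)\|_{(\mc O(\omega))'}\,dt\bigr)\sup_\omega|h_\z|$, which gives the claimed bound.

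For \emph{sufficiency} in~(I) --- the substantial half --- I start from $u\in\mc H'$ whose Fourier transform $F:=\what u$ is entire and obeys the stated bound, and aim to show that $u$ extends continuously to $\mc O(\omega_\delta)$ for every complex neighbourhood $\omega_\delta\subset\{|\Re z|<r+\delta,\ |\Im z|<\delta\}$ of $\oline{B(r)}$; since $\oline{B(r)}$ is convex with supporting function $\eta\mapsto r|\eta|$ --- exactly the exponential rate appearing in the hypothesis --- and the $\omega_\delta$ shrink to it, this identifies $\oline{B(r)}$ as a carrier, i.e. $\mathrm{supp}\,u\subset\oline{B(r)}$. Concretely, I would regularise by Gaussians: set $v_\tau:=\mc F^{-1}\bigl(e^{-\tau|\xi|^2}F(\xi)\bigr)$, which for $\tau>0$ is a Schwartz function (the Gaussian beats the subexponential growth of $F$ along $\R^n$), extends to an entire function of~$z$, and converges to $u$ in $\mc A'$ as $\tau\to0^+$. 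Writing $v_\tau$ by Fourier inversion and shifting the contour $\xi\mapsto\xi+iN\,x/|x|$ (legitimate since the entire extension of $\xi\mapsto e^{-\tau|\xi|^2}F(\xi)$ decays in the real directions), then optimising over $N>0$, one gets for $|x|>r+\delta$ an estimate $|v_\tau(x)|\le C_{\tau,\delta}\exp\bigl(-(|x|-r-\delta)^2/(4\tau)\bigr)$, so that $v_\tau$ concentrates on $\oline{B(r+\delta)}$ as $\tau\to0$; combined with $v_\tau\to u$ in $\mc A'$ this shows $u$ is carried by $\oline{B(r+\delta)}$ for every $\delta>0$, hence by $\oline{B(r)}$. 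An equivalent route is the Fourier--Borel/Ehrenpreis--Martineau one, writing $<u,\vphi>$ as a contour integral of $F$ against $\vphi$ and deforming it.

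The hard part will be, predictably, this sufficiency step, and specifically the functional-analytic bookkeeping rather than any single inequality: fixing the topologies on $\mc A(\omega_\delta)$ and on $\mc A'$, checking that the regularisations $v_\tau$ are genuine Schwartz (in fact entire) functions with order-two control in the imaginary directions and that they do converge to $u$ in $\mc A'$, and --- the delicate point --- upgrading ``$v_\tau$ is uniformly small outside $\oline{B(r+\delta)}$'' into ``$u$ extends to $\mc O(\omega_\delta)$ with a $\tau$-uniform bound $|<v_\tau,\vphi>|\le C_\delta\sup_{\oline{\omega_{2\delta}}}|\vphi|$'', which is then pushed to the limit $\tau\to0$; a density argument (Oka--Weil, using that balls are polynomially convex) lets one reduce from entire test functions $\vphi\in\mc H$ to germs $\vphi\in\mc A(\omega_\delta)$. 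The whole scheme is classical --- the result being essentially the Paley--Wiener theorem of Ehrenpreis and Martineau for analytic functionals --- so I claim no originality; these are simply the steps I would verify.
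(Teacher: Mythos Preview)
The paper does not prove this statement: Theorem~\ref{th:PW} is presented in Subsection~2.2 as a preliminary tool, with the sentence ``let us recall the Paley--Wiener Theorem: the present form is the one stated in \cite{C-DG-S}, Section~1'', and no proof is given. So there is nothing to compare your argument against.

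That said, your outline is a sound sketch of the classical Ehrenpreis--Martineau version of Paley--Wiener for analytic functionals. The necessity direction and the uniform-in-$t$ variant for~(II) are indeed straightforward, and your Gaussian-regularisation plus contour-shift scheme for sufficiency is one of the standard routes. The only place I would flag for care is the passage ``$v_\tau\to u$ in $\mc A'$ together with uniform smallness outside $\oline{B(r+\delta)}$ implies $u$ is carried by $\oline{B(r+\delta)}$'': you correctly identify this as the delicate step, and it does require exactly the kind of $\tau$-uniform dual bound you describe (rather than mere pointwise decay of $v_\tau$). Since the paper treats the theorem as a black box, your write-up actually goes further than what is required here.
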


The previous theorem will be the fundamental tool to investigate propagation of the support of the initial data,
and then to derive the finite propagation speed property (see Subection \ref{s:speed} below).

On the other hand, in order to make this argument consistent, we need a general existence result for operator \eqref{def:Lu}.
This is provided by the following statement, which is in the same spirit of the celebrated Cauchy-Kovalevska Theorem.
We refer to Theorem 1 of \cite{C-DG-S} and Theorem 17.2 of \cite{Treves} for analogous results.
\begin{thm} \label{th:CK}
Let $L$ be the operator defined in \eqref{def:Lu}, and suppose that its coefficients $\bigl(A_j\bigr)_j$ belong to the space $L^1\bigl([0,T];\mc{M}_m(\R)\bigr)$
(so that \eqref{hyp:coeff-L1} is verified). 

Then, for any $u_0\in\mc{H}'$ and $f\in L^1\bigl([0,T];\mc{H}'\bigr)$, the Cauchy problem
\begin{equation} \label{eq:Cauchy}
\left\{\begin{array}{l}
        Lu\;=\;f \\[1ex]
	u_{|t=0}\;=\;u_0\,,
       \end{array}\right. 
\end{equation}
admits a unique solution $u\in\mc{C}\bigl([0,T];\mc{H}'\bigr)$.
\end{thm}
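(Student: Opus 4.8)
The plan is to conjugate the equation by the Fourier transform and reduce the Cauchy problem \eqref{eq:Cauchy} to a family of linear ordinary differential equations in the time variable, one for each frequency $\z\in\C^n$, to which Carath\'eodory's theory applies thanks to hypothesis \eqref{hyp:coeff-L1}. Recall that, for $w\in\mc H'$, one has $\what{\d_jw}(\z)=i\,\z^j\,\what w(\z)$; hence, denoting again by $A(t,\z):=\sum_{j=1}^n\z^j\,A_j(t)$ the natural extension to $\z\in\C^n$ of the symbol \eqref{def:symbol}, the operator $L$ becomes, on the Fourier side,
\[
\what{Lu}(t,\z)\;=\;\d_t\what u(t,\z)\,+\,i\,A(t,\z)\,\what u(t,\z)\,.
\]
Thus \eqref{eq:Cauchy} is (formally) equivalent to solving, for each fixed $\z$, the linear system $\d_t v(t,\z)+i\,A(t,\z)\,v(t,\z)=\what f(t,\z)$ with $v(0,\z)=\what{u_0}(\z)$.

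First I would introduce the fundamental solution $\mc U(t,s,\z)\in\mc M_m(\C)$ of the homogeneous system, i.e. the solution of $\d_t\mc U(t,s,\z)=-\,i\,A(t,\z)\,\mc U(t,s,\z)$ with $\mc U(s,s,\z)=\Id$. Since, by \eqref{hyp:coeff-L1}, $t\mapsto|A(t,\z)|_{\mc M}$ belongs to $L^1([0,T])$ for every $\z$, Carath\'eodory's theorem guarantees that $\mc U$ exists, is absolutely continuous in $(t,s)$, and --- being the locally uniform limit of its Picard iterates, which are polynomials in $\z$ --- it is an entire function of $\z$. The Duhamel formula then provides the candidate solution on the Fourier side,
\[
\what u(t,\z)\;=\;\mc U(t,0,\z)\,\what{u_0}(\z)\;+\;\int_0^t\mc U(t,s,\z)\,\what f(s,\z)\,ds\,,
\]
and a direct Gronwall argument on the integral equation for $\mc U$ yields the key estimate $|\mc U(t,s,\z)|_{\mc M}\le\exp(\kappa\,|\z|)$ for all $0\le s,t\le T$, with $\kappa:=\sum_{j=1}^n\|A_j\|_{L^1([0,T];\mc M_m(\R))}$; this is precisely where hypothesis \eqref{hyp:coeff-L1} enters quantitatively (and, as announced in the Introduction, it is the $L^1$-norm of the coefficients that governs the growth).

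Next I would check that the right-hand side above is the Fourier transform of an element of $\mc C([0,T];\mc H')$. From $u_0\in\mc H'$ and $f\in L^1([0,T];\mc H')$ one extracts, possibly enlarging the constants, an exponent $c_0>0$, a constant $C_0>0$ and a function $\phi\in L^1([0,T])$ such that $\what{u_0}$ is entire with $|\what{u_0}(\z)|\le C_0\,e^{c_0|\z|}$, and $\what f(t,\cdot)$ is entire with $|\what f(t,\z)|\le\phi(t)\,e^{c_0|\z|}$ for almost every $t$. Combining this with the Duhamel formula and the bound on $\mc U$ shows that $\what u(t,\cdot)$ is entire and
\[
\bigl|\what u(t,\z)\bigr|\;\le\;\bigl(C_0+\|\phi\|_{L^1([0,T])}\bigr)\,\exp\bigl((\kappa+c_0)\,|\z|\bigr)
\]
uniformly in $t\in[0,T]$. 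By the Paley--Wiener characterisation of $\mc H'$ (the analogue for entire functionals of Theorem \ref{th:PW}), $\what u(t,\cdot)$ is then the Fourier transform of a unique $u(t)\in\mc H'$, which by construction solves \eqref{eq:Cauchy}. Continuity of $t\mapsto u(t)$ in $\mc H'$ follows from the fact that $t\mapsto\what u(t,\z)$ is continuous --- indeed absolutely continuous, being the solution of a Carath\'eodory ODE --- for each fixed $\z$, together with the above uniform exponential bound, via a dominated convergence argument, exactly as in the proof of Theorem 1 of \cite{C-DG-S}.

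Finally, uniqueness is immediate on the Fourier side: if $u\in\mc C([0,T];\mc H')$ solves \eqref{eq:Cauchy} with $u_0=0$ and $f=0$, then for each $\z$ the function $t\mapsto\what u(t,\z)=\,<u(t),h_\z>$ solves the homogeneous linear ODE $\d_t\what u+i\,A(t,\z)\,\what u=0$ with $\what u(0,\z)=0$, whence $\what u(t,\z)\equiv0$ by Gronwall; differentiating in $\z$ at the origin gives $<u(t),z^\alpha>\,=0$ for every multi-index $\alpha$, and since polynomials are dense in $\mc H$ and $u(t)$ is continuous, $u(t)=0$. The main obstacles I anticipate are ``soft'' rather than computational: making rigorous the passage from the scalar-in-$\z$ information (holomorphy, exponential growth, continuity in $t$) to genuine statements about the $\mc H'$-valued map $t\mapsto u(t)$ --- in particular the measurability and continuity in the $(DF)$-type topology of $\mc H'$, and the precise form of the Paley--Wiener theorem for $\mc H'$ --- together with the extraction of the uniform exponential bound $|\what f(t,\z)|\le\phi(t)\,e^{c_0|\z|}$ from the bare hypothesis $f\in L^1([0,T];\mc H')$.
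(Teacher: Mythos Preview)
Your proposal is correct and follows essentially the same route as the paper: the paper itself does not give a detailed proof but refers to Appendix~B of \cite{C-DG-S} and says the argument is ``based on a Picard iteration scheme'' after passing to the Fourier side, with the $\mc C$-in-time regularity read off from the equation $Lu=f$ --- exactly the skeleton you describe via Carath\'eodory/Duhamel, the exponential bound on the fundamental matrix, and the Paley--Wiener characterisation of $\mc H'$. The ``soft'' points you flag (uniform exponential type for $\what f(t,\cdot)$ from $f\in L^1([0,T];\mc H')$, and the Paley--Wiener theorem for $\mc H'$ rather than $\mc A'$) are precisely the ones absorbed by the reference to \cite{C-DG-S}.
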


The proof is analogous to the one given in Appendix B of \cite{C-DG-S}, and it is based on a Picard iteration scheme. Actually, in that paper the result is
stated for scalar wave operators, but the proof consists in recasting the equation as a first order system. Here, the only difference is the regularity
in time of the solution, which is obtained by an inspection of the equation $Lu=f$.

\begin{rem} \label{r:CK}
We remark that \emph{no hyperbolicity} hypotheses are needed on the system, for the general existence result in the space of holomorphic functionals.
\end{rem}

\section{Finite propagation speed and related results} \label{s:speed}

In the present section we state and prove our main result, concerning \emph{local existence} and \emph{finite propagation speed} for
hyperbolic systems which are microlocally symmetrizable, in the sense specified by Definition \ref{d:micro_symm}.

\begin{thm} \label{th:speed}
 Let $L$ be the operator defined by formula \eqref{def:Lu}, with coefficients $\bigl(A_j\bigr)_j$ 
verifying property \eqref{hyp:coeff-L1}. Suppose that $L$ is microlocally symmetrizable, in the sense of Definition \ref{d:micro_symm},
and let $S$ be its symmetrizer. \\
Suppose also that one of the following conditions holds true:
\begin{itemize}
 \item[(i)] ~ either the map $(t,\nu)\,\mapsto\,S(t,\nu)$ is continuous on $[0,T]\times\mbb{S}^{n-1}$, or
 \item[(ii)] ~ the coefficients $\bigl(A_j\bigr)_j$ are uniformly bounded, i.e. there exists $C_0>0$ such that
\begin{equation} \label{est:bounded}
\sum_{j=1}^n\;\sup_{t\in[0,T]}\left|A_j(t)\right|_{\mc M}\,\leq\,C_0\,.
\end{equation}
\end{itemize}
Finally, assume that $u_0\in\mc{A}'$ and $f\in L^1\bigl([0,T];\mc{A}'\bigr)$, with ${\rm supp}\,u_0$ and ${\rm supp}\,f(t)$ (for almost
every $t$) contained in the ball $B(r_0)$, for some radius $r_0>0$.

Then the solution $u$ to problem \eqref{eq:Cauchy} belongs to $\mc{C}\bigl([0,T];\mc{A}'\bigr)$. Moreover, defined the quantities
$$
\alpha(t)\,:=\,\sum_{j=1}^n\bigl|A_j(t)\bigr|_{\mc{M}}\qquad\mbox{ and }\qquad r(t)\,:=\,r_0\,+\,2\sqrt{\Lambda}\int_0^t\alpha(\tau)\;d\tau\,,
$$
then, for all $t\in[0,T]$, one has that $\;{\rm supp}\,u(t)\,\subset\,B\bigl(r(t)\bigr)$.
 \end{thm}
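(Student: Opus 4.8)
The plan is to pass to the Fourier side, turn \eqref{eq:Cauchy} into a linear ODE in the dual variable, run an energy estimate in which the symmetrizer annihilates the ``elliptic'' ($|\xi|$) growth, and then read off the support from the Paley-Wiener Theorem \ref{th:PW}. First, by Theorem \ref{th:CK} (no hyperbolicity needed) problem \eqref{eq:Cauchy} has a unique solution $u\in\mc C\bigl([0,T];\mc H'\bigr)$. Fix $\zeta=\xi+i\eta\in\C^n$ and set $\what u(t,\zeta)=<u(t),h_\zeta>$. Testing $Lu=f$ against $h_\zeta$ and using $\widehat{\d_j u}(\zeta)=i\zeta_j\what u(\zeta)$, one finds that $t\mapsto\what u(t,\zeta)$ is absolutely continuous on $[0,T]$ and solves
$$
\d_t\what u(t,\zeta)\,+\,i\,A(t,\zeta)\,\what u(t,\zeta)\,=\,\what f(t,\zeta)\,,\qquad\what u(0,\zeta)\,=\,\what{u_0}(\zeta)\,,
$$
where $A(t,\zeta):=\sum_j\zeta_jA_j(t)=A(t,\xi)+i\,A(t,\eta)$, the last two matrices being the symbol \eqref{def:symbol} at the real vectors $\xi$ and $\eta$. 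Since $u_0\in\mc A'$ and $f(t)\in\mc A'$ have supports in $B(r_0)$, Theorem \ref{th:PW}(I)--(II) gives, for every $\delta>0$, a constant $C_\delta$ with $|\what{u_0}(\zeta)|+\int_0^T|\what f(t,\zeta)|\,dt\le C_\delta\exp\bigl(\delta|\xi|+(r_0+\delta)|\eta|\bigr)$ on $|\zeta|\geq1$.

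The heart of the argument is the estimate: for every $\delta>0$ there is $C_\delta>0$ with $|\what u(t,\zeta)|\le C_\delta\exp\bigl(\delta|\xi|+(r(t)+\delta)|\eta|\bigr)$ for all $t\in[0,T]$ and all $|\zeta|\geq1$. To obtain it I would work with the symmetrized energy $E(t):=\bigl(S(t,\xi)\,\what u(t,\zeta),\what u(t,\zeta)\bigr)$, which by the spectral bounds on $S$ satisfies $\lambda|\what u|^2\le E\le\Lambda|\what u|^2$. Heuristically, $\frac{d}{dt}E=(\d_tS\,\what u,\what u)+2\Re\bigl(S(-iA(t,\xi)+A(t,\eta))\what u+S\what f,\what u\bigr)$; the decisive point is that $S(t,\xi)\,A(t,\xi)$ is self-adjoint, whence $\Re\bigl(-i\,S(t,\xi)A(t,\xi)\what u,\what u\bigr)=0$ and the term carrying $|\xi|$ drops out, while $|A(t,\eta)|_{\mc M}\le|\eta|\,\alpha(t)$ contributes a growth of order $\alpha(t)|\eta|$ which, once one passes between $E$ and $|\what u|^2$ through the bounds $\lambda\Id\le S\le\Lambda\Id$, integrates precisely to the radius $r(t)=r_0+2\sqrt\Lambda\int_0^t\alpha$; the source contributes a $\sqrt\Lambda\,|\what f|$ term. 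Gr\"onwall's lemma, together with the Paley-Wiener bounds on the data, then yields the claimed estimate (after relabelling $\delta$), \emph{provided} the $\d_tS$ contribution is disposed of.

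Disposing of $\d_tS$ is the main obstacle: $S$ is only $L^\infty$ in time, so $\d_tS$ need not exist. This is exactly where hypotheses (i)/(ii) enter, through the device of freezing $S$ in time on a finite partition $0=t_0<\dots<t_N=T$ of mesh $h$: on each $[t_k,t_{k+1}]$ one runs the above estimate with the \emph{constant} matrix $S(t_k,\xi)$, so no time derivative of $S$ ever appears; one pays, at the $N$ interfaces, a factor controlled by a fixed power of $1+|S(t_k^+,\xi)-S(t_k^-,\xi)|_{\mc M}/\lambda$ (when switching from one frozen energy to the next), and, inside each interval, an extra exponent error $\tfrac2\lambda|\xi|\,\alpha(t)\,|S(t_k,\xi)-S(t,\xi)|_{\mc M}$ coming from the defect of self-adjointness of $S(t_k,\xi)A(t,\xi)$. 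The crucial observation is that for a \emph{fixed} $\delta$ it suffices to pick one partition making the total $|\xi|$-error $\le\delta|\xi|$; then $N$, and hence the interface factor, is a finite constant $C_\delta$. Under (i), $S$ is uniformly continuous on the compact $[0,T]\times\mbb S^{n-1}$, so $|S(t_k,\xi)-S(t,\xi)|_{\mc M}\le\omega_S(h)$ with $\omega_S(h)\to0$, and since $\alpha\in L^1$ the total error is $\le\tfrac2\lambda\,\omega_S(h)\,|\xi|\int_0^T\alpha$, made $\le\delta|\xi|$ for $h$ small. Under (ii), one replaces $S$ by its time-averaged step symmetrizer $S_h$ (piecewise constant on the partition, still self-adjoint, still between $\lambda\Id$ and $\Lambda\Id$, still symmetrizing $A(t,\xi)$ up to the controllable defect $(S_h-S)A(t,\xi)$); using $\alpha\le C_0$, the error is $\le\tfrac{2C_0}\lambda|\xi|\int_0^T|S_h(t,\nu)-S(t,\nu)|_{\mc M}\,dt$, and the integral tends to $0$ as $h\to0$ \emph{uniformly in} $\nu\in\mbb S^{n-1}$, since $\nu\mapsto S(\cdot,\nu)$ is a continuous map into $L^1\bigl([0,T]\bigr)$ (dominated convergence: $\nu\mapsto S(t,\nu)$ is continuous for a.e.\ $t$ and $|S|_{\mc M}\le\Lambda$) and therefore has relatively compact range.

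Finally, the estimate for $|\what u(t,\zeta)|$ and Theorem \ref{th:PW}(I) show that, for each $t$, $u(t)\in\mc A'$ with ${\rm supp}\,u(t)\subset B\bigl(r(t)\bigr)$. The continuity $u\in\mc C\bigl([0,T];\mc A'\bigr)$ follows by combining $u\in\mc C\bigl([0,T];\mc H'\bigr)$ with the uniform-in-$t$ exponential bound just proved (which confines all the $u(t)$ to the fixed ball $B(r(T))$), in the now-standard manner.
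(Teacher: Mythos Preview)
Your overall strategy coincides with the paper's: reduce to the ODE \eqref{eq:fourier}, run a symmetrized energy estimate in which the self-adjointness of $S(t,\xi)A(t,\xi)$ kills the $|\xi|$-growth while the $A(t,\eta)$ part contributes exactly the $2\sqrt{\Lambda}\,\alpha(t)\,|\eta|$ in $r(t)$, and then invoke Theorem~\ref{th:PW}. The genuine difference lies in how the time irregularity of $S$ is disposed of.

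The paper regularizes by \emph{mollification}: it sets $S_\veps=\rho_\veps*_tS$, works with the smooth energy $E_\veps=S_\veps(t,\xi)\what u\cdot\what u$, and differentiates. This produces two error terms (Proposition~\ref{p:speed-est}): one $\mc I_1$ carrying $\int_0^T|\d_tS_\veps|_{\mc M}\,dt$, and one $\mc I_2$ carrying $|\xi|\int_0^T|S_\veps-S|_{\mc M}\,\alpha(t)\,dt$. These are balanced by the link $\veps=1/|\z|$ together with Lemma~\ref{l:S_e}; hypotheses (i)/(ii) then intervene only in bounding $\mc I_2$ (Subsection~\ref{ss:end}). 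Your approach instead \emph{freezes} $S$ on a partition of mesh $h$: on each subinterval the symmetrizer is constant, so no $\d_tS$ term ever appears, and the analogue of $\mc I_1$ is replaced by a finite product of interface factors, each bounded by $\Lambda/\lambda$. Since $h$ is chosen once for a given $\delta$ (independently of $\z$), that product is simply absorbed into $C_\delta$. Only the analogue of $\mc I_2$ --- the defect $(S_{\rm frozen}-S)A(t,\xi)$ --- remains, and your two-case treatment (pointwise bound via uniform continuity under (i); $\alpha\le C_0$ combined with uniform-in-$\nu$ $L^1$-convergence of the averaged step approximation under (ii), via the compact-range argument) is exactly parallel to the paper's handling of $\mc I_2$. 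Your compactness argument under (ii) is in fact the same mechanism that the paper encodes in the uniform vanishing of $\omega_S(\xi,\sigma)$ as $\sigma\to0$.

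Both routes are correct. Yours is a bit more elementary --- no convolution kernel, no $\veps$--$|\z|$ coupling, and no need to estimate $\int_0^T|\d_tS_\veps|$ --- at the price of the (harmless) interface constant. The paper's mollification fits the Colombini--De~Giorgi--Spagnolo template it cites, and yields a single differential inequality for the energy valid for all $\z$ simultaneously, which is convenient if one later wants quantitative dependence of $C_\delta$ on $\delta$.
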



\begin{rem} \label{r:adj}
Notice that the hypotheses of the previous theorem are invariant under considering the adjoint problem (recall Lemma \ref{l:symm}).
\end{rem}

The rest of the present section is devoted to the proof of Theorem \ref{th:speed}.
Let us recall that the existence of a unique solution $u\in\mc{C}\bigl([0,T];\mc{H}'\bigr)$ to problem \eqref{eq:Cauchy} is
guaranteed by Theorem \ref{th:CK} above. Then, thanks to Theorem \ref{th:PW}, we have just to establish suitable energy estimates for $u$.

The strategy is the following: first of all, we smooth out the symmetrizer with respect to the time variable. Then, we define an approximate energy function in the Phase Space,
for which we show precise estimates. Finally, we conclude by linking the Fourier variable with the approximation parameter and by applying Paley-Wiener Theorem.

Before starting, let us state an important remark.

\begin{rem} \label{r:reg-S}
Notice that, since the coefficients just depend on time, the dual variable is treated as a parameter at this level.
In particular, we do not exploit the $\mc C^\infty$ regularity of the symmetrizer with respect to $\xi$.

More precisely, as for the dependence on the dual variable, we will use the following facts only:
\begin{enumerate}
 \item the continuity for $\nu\in\mbb{S}^{n-1}$, in the next paragraph, in order to have a uniform decay of the function $\omega_S$ (see its definition below);
 \item hypothesis \textit{(i)} of Theorem \ref{th:speed}, in Subsection \ref{ss:end} below, in order to get the final estimate
 without knowing \textsl{a priori} $L^\infty$ bounds on the coefficients. 
\end{enumerate}
\end{rem}

\subsection{Approximation of the coefficients} \label{ss:approx}

First of all, let us extend the function $S(\cdot,\xi)$ (for any fixed $\xi\neq0$) out of $[0,T]$, by taking its mean value on this interval, i.e. the quantity
$$
\mu_S(\xi)\,:=\,\frac{1}{T}\,\int^T_0|S(t,\xi)|_{\mc M}\,dt\,.
$$
For convenience, we keep the notation $S(t,\xi)$ even to denote the symmetrizer extended to all times $t\in\R$.

Furthermore, for $0<\sigma<T$, let us introduce the function
$$
\omega_S(\xi,\sigma)\,:=\,\sup_{\tau\in[0,\sigma]}\int^{T-\sigma}_0|S(t+\tau,\xi)\,-\,S(t,\xi)|_{\mc M}\,dt\,.
$$
Notice that, $S$ being smooth on the unitary sphere $\mbb{S}^{n-1}$ of the Phase Space,
the function $\omega_S(\xi,\sigma)\,\longrightarrow\,0$ for $\sigma\ra0$, uniformly for $|\xi|=1$.

Thanks to Proposition 1 of \cite{C-DG-S} (see Appendix D of that paper), for all $0\leq\sigma\leq T/2$ and all $\xi\neq0$ fixed, the following estimate holds true:
\begin{equation} \label{est:approx-S}
\sup_{\tau\in[0,\sigma]}\int^T_0\left|S(t+\tau,\xi)-S(t,\xi)\right|_{\mc M}\,dt\,\leq\,
2\left(\omega_S(\xi,\sigma)+\frac{\sigma}{T}\int^T_0\left|S(t,\xi)\right|_{\mc M}\,dt\right).
\end{equation}

Now, for reasons which will become clear in a while, we need to smooth out the symmetrizer by convolution with a mollifier kernel.
More precisely, we take an even function $\rho\in\mc{C}^\infty_0(\mbb{R})$, with $0\leq\rho\leq1$, whose support is contained in the interval $[-1,1]$ and
such that $\int\rho(t)\,dt=1$; then we define the family
$$
\rho_\veps(t)\,:=\,\frac{1}{\veps}\,\,\rho\!\left(\frac{t}{\veps}\right)\qquad\qquad\forall\,\veps\in\,]0,1]\,.
$$
Next, for all $\veps\in\,]0,1]$ we set
\begin{equation} \label{eq:S_e}
S_\veps(t,\xi)\,:=\,\bigl(\rho_\veps\,*_t\,S(\cdot,\xi)\bigr)(t)\,=\,\int_{\mbb{R}_\tau}\rho_{\veps}(t-\tau)\,S(\tau,\xi)\,d\tau\,,
\end{equation}
which is a smooth function both of $t\in\R$ and $\xi\in\R^n\setminus\{0\}$ (separately).

The family $\bigl(S_\veps\bigr)_\veps$ enjoys the following properties.
\begin{lemma} \label{l:S_e}
\begin{enumerate}
\item ~For each $\veps\in\,]0,1]$, the matrix $S_\veps$ is still self-adjoint, positive definite and bounded. In particular,
for any $(t,\xi)$ one has
$$\lambda\leq S_\veps(t,\xi)\leq\Lambda\,,$$
for the same positive constants $\lambda$ and $\Lambda$ pertaining to the symmetrizer $S$. 
\item ~One also has the following estimates:
\begin{eqnarray*}
\int^T_0\left|S_\veps(t,\xi)-S(t,\xi)\right|_{\mc M}\,dt & \leq & C\,\left(\omega_S(\xi,\veps)\,+\,\veps\,\sqrt{\Lambda}\right) \\
\int^T_0\left|\d_tS_\veps(t,\xi)\right|_{\mc M}\,dt & \leq & \frac{C}{\veps}\,\left(\omega_S(\xi,\veps)\,+\,\veps\,\sqrt{\Lambda}\right)
\end{eqnarray*}
for a constant $C>0$ just depending on the $L^1$ norms of $\rho$ and $\rho'$.
\end{enumerate}
\end{lemma}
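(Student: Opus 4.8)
The plan is to prove Lemma~\ref{l:S_e} by exploiting the basic properties of mollification together with estimate \eqref{est:approx-S} and the uniform bounds on $S$.

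For part 1, I would argue pointwise in $(t,\xi)$. Since $\rho_\veps\geq0$ and $\int\rho_\veps=1$, the matrix $S_\veps(t,\xi)$ is a convex combination (an average) of the matrices $S(\tau,\xi)$ over $\tau$ in a neighbourhood of $t$. Self-adjointness is preserved because the set of self-adjoint matrices is a real linear subspace of $\mc{M}_m(\C)$ and integration against a positive kernel stays in it. For the two-sided bound, I would test against an arbitrary $v\in\C^m$: from $\lambda\,|v|^2\leq S(\tau,\xi)v\cdot v\leq\Lambda\,|v|^2$ for all $\tau$, integrating against $\rho_\veps(t-\tau)\,d\tau$ and using $\int\rho_\veps=1$ gives $\lambda\,|v|^2\leq S_\veps(t,\xi)v\cdot v\leq\Lambda\,|v|^2$, i.e. $\lambda\,\Id\leq S_\veps(t,\xi)\leq\Lambda\,\Id$ in the order introduced in the Notations. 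In particular $S_\veps$ is positive definite, hence invertible, and $|S_\veps|_{\mc M}\leq\Lambda$ since $S_\veps$ is self-adjoint. One should also recall here, from the preliminary extension, that $|S(t,\xi)|_{\mc M}\leq\Lambda$ for all $t\in\R$ (the extension by the mean value $\mu_S(\xi)\leq\Lambda$ preserves this bound), so the convolution is well defined on $[0,T]$.

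For part 2, the first estimate is a standard mollification bound. Writing $S_\veps(t,\xi)-S(t,\xi)=\int\rho_\veps(\tau)\bigl(S(t-\tau,\xi)-S(t,\xi)\bigr)\,d\tau$ and using $\Supp\rho_\veps\subset[-\veps,\veps]$, I take $|\cdot|_{\mc M}$ inside, integrate in $t$ over $[0,T]$, apply Fubini, and bound the inner integral by $\sup_{|\tau|\leq\veps}\int_0^T|S(t-\tau,\xi)-S(t,\xi)|_{\mc M}\,dt$. Since $\rho$ is even this is controlled by the one-sided quantity appearing on the left of \eqref{est:approx-S} (with $\sigma=\veps\leq T/2$), which by \eqref{est:approx-S} is at most $2\bigl(\omega_S(\xi,\veps)+\tfrac{\veps}{T}\int_0^T|S(t,\xi)|_{\mc M}\,dt\bigr)\leq 2\bigl(\omega_S(\xi,\veps)+\veps\,\Lambda\bigr)$; using $\|\rho_\veps\|_{L^1}=\|\rho\|_{L^1}$ and absorbing constants yields the claimed bound with $\sqrt{\Lambda}$ replaced by $\Lambda$ — and since one may assume $\Lambda\geq\lambda>0$ and only an upper bound is needed, writing $\sqrt{\Lambda}$ is harmless up to adjusting $C$ (alternatively one keeps $\Lambda$; I would follow the paper's normalisation). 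For the second estimate, I differentiate under the integral sign: $\d_tS_\veps(t,\xi)=\int\rho_\veps'(t-\tau)S(\tau,\xi)\,d\tau$, and since $\int\rho_\veps'=0$ I may subtract a constant and write $\d_tS_\veps(t,\xi)=\int\rho_\veps'(t-\tau)\bigl(S(\tau,\xi)-S(t,\xi)\bigr)\,d\tau$. Now $\|\rho_\veps'\|_{L^1}=\veps^{-1}\|\rho'\|_{L^1}$, so the same Fubini-plus-\eqref{est:approx-S} argument produces the extra factor $1/\veps$.

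I do not expect a serious obstacle here; the only points requiring care are: (a) making sure the mollification is performed after the extension of $S(\cdot,\xi)$ to all of $\R$, so that $S_\veps(t,\xi)$ is well defined for every $t\in[0,T]$ and still satisfies the same bounds — this is why the extension by the mean value was set up beforehand; and (b) correctly reducing the two-sided $\sup_{|\tau|\leq\veps}$ to the one-sided $\sup_{\tau\in[0,\veps]}$ so that \eqref{est:approx-S} applies, which uses that $\rho$ is even together with the change of variables $t\mapsto t-\tau$ inside the $t$-integral. Everything else is bookkeeping of the $L^1$ norms of $\rho$ and $\rho'$.
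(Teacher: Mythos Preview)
Your proposal is correct and follows essentially the same route as the paper: the paper declares part~1 ``straightforward'' and, for part~2, writes $S_\veps-S$ and $\d_tS_\veps$ as integrals of $\rho(\tau/\veps)\bigl(S(t+\tau,\xi)-S(t,\xi)\bigr)$ and $\rho'(\tau/\veps)\bigl(S(t+\tau,\xi)-S(t,\xi)\bigr)$ respectively (using $\int\rho'=0$), bounds by $\|\rho\|_{L^1}$ or $\veps^{-1}\|\rho'\|_{L^1}$ times $\sup_{\tau\in[0,\veps]}\int_0^T|S(t+\tau,\xi)-S(t,\xi)|_{\mc M}\,dt$, and invokes \eqref{est:approx-S} together with the uniform bound on $|S|_{\mc M}$. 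Your observations about the two-sided versus one-sided supremum and about $\Lambda$ versus $\sqrt{\Lambda}$ are accurate refinements that the paper glosses over.
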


\begin{proof}
Thep properties stated in point $1.$ are straightforward. So, let us focus on the inequalities of point $2.$ in the previous statement: we
start by proving the first one.

From the definition of $S_\veps$, one easily infers
\begin{eqnarray*}
\int^T_0\left|S_\veps(t,\xi)-S(t,\xi)\right|_{\mc M}\,dt & = & \int^T_0\frac{1}{\veps}\left|\int_{|\tau|\leq\veps}
\rho(\tau/\veps)\,\bigl(S(t+\tau,\xi)-S(t,\xi)\bigr)\,d\tau\right|_{\mc M}\,dt \\
& \leq & \|\rho\|_{L^1}\,\sup_{\tau\in[0,\veps]}\int^T_0\left|S(t+\tau,\xi)-S(t,\xi)\right|_{\mc M}\,dt\,.
\end{eqnarray*}
Now, it is just a matter of using estimate \eqref{est:approx-S} and the uniform bound $|S(t,\xi)|_{\mc M}\leq\sqrt{\Lambda}$; then,
the inequality immediately follows.

Concerning the estimate involving $\d_tS_\veps$, we argue in a very similar way. Namely, using the fact that $\int\rho'\,=\,0$,
we can write
$$
\int^T_0\left|\d_tS_\veps(t,\xi)\right|_{\mc M}\,dt\,=\,\int^T_0\frac{1}{\veps^2}\left|\int_{|\tau|\leq\veps}
\rho'(\tau/\veps)\,\bigl(S(t+\tau,\xi)-S(t,\xi)\bigr)\,d\tau\right|_{\mc M}\,dt\,;
$$
the right-hand side of this identity can be buonded as done above, giving us
$$
\int^T_0\left|\d_tS_\veps(t,\xi)\right|_{\mc M}\,dt\,\leq\,\frac{1}{\veps}\,\|\rho'\|_{L^1}\,\sup_{\tau\in[0,\veps]}\int^T_0\left|S(t+\tau,\xi)-S(t,\xi)\right|_{\mc M}\,dt\,.
$$
Now, one concludes thanks to \eqref{est:approx-S} again.
\end{proof}

\begin{rem} \label{r:inverse}
Observe that, from point $(1)$ of the previous lemma, we easily deduce the inequality
$\left|S^{-1}_\veps\right|_{\mc M}\,\leq\,1/\sqrt{\lambda}$.
\end{rem}

\subsection{Energy estimates}
Let $u\in\mc{C}\bigl([0,T];\mc{H}'\bigr)$ be the solution to problem \eqref{eq:Cauchy}, whose existence is guaranteed by 
Theorem \ref{th:CK}. Our goal is to apply Theorem \ref{th:PW}, after establishing suitable energy estimates for $u$.

To this end, we start by proving a preliminary result.

\begin{prop} \label{p:speed-est}
  Let $L$ be the operator defined by formula \eqref{def:Lu}, with coefficients $\bigl(A_j\bigr)_j$ 
verifying property \eqref{hyp:coeff-L1}. Suppose that $L$ is microlocally symmetrizable, in the sense of Definition \ref{d:micro_symm},
and let $S$ be its symmetrizer. Finally, for $\z=\xi+i\eta\,\in\,\C^n$, define the function
$$ 
\vphi_\veps(t,\z)\,:=\,\frac{2}{\sqrt{\lambda}}\,\left|\d_tS_\veps(t,\xi)\right|_{\mc M}\,+\,
\frac{2}{\sqrt{\lambda}}\,\left|S_\veps(t,\xi)-S(t,\xi)\right|_{\mc M}\,\alpha(t)\,|\xi|\,+\,
2\,\sqrt{\Lambda}\,\alpha(t)\,|\eta|\,,
$$ 
where $\alpha$ is the quantity defined in the statement of Theorem \ref{th:speed} and $\bigl(S_\veps\bigr)_\veps$ is the smooth approximation
introduced in \eqref{eq:S_e}.

Then there exists a constant $C>0$ such that, for all $t\in[0,T]$ and all $|\z|\geq1$, one has the estimate
$$
\left|\what{u}(t,\z)\right|\,\leq\,C\,\frac{\sqrt{\Lambda}}{\sqrt{\lambda}}\,\exp\left(\int_0^t\vphi_\veps(\tau,\z)\,d\tau\right)\,
\left(\left|\what{u}_0(\z)\right|\,+\,\int_0^T\left|\what{f}(\tau,\z)\right|\,d\tau\right)\,.
$$
where $u$ is the solution to \eqref{eq:Cauchy}, given by Theorem \ref{th:CK}.
\end{prop}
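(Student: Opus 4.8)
The plan is to derive an energy identity for the Fourier transform $\what u(t,\z)$ by testing the equation $Lu = f$ against a suitable ``symmetrized'' quantity built from the regularized symmetrizer $S_\veps$, and then to run a Gronwall-type argument in $t$. First I would apply the Fourier transform in the space variable to the equation $\d_t u + \sum_j A_j(t)\d_j u = f$. Since the coefficients depend only on $t$, the $\xi$-variable (and more generally $\z = \xi + i\eta$) is just a parameter, and we get the ODE (in $t$, with values in $\C^m$)
\begin{equation*}
\d_t \what u(t,\z) \,+\, i\,A(t,\z)\,\what u(t,\z) \,=\, \what f(t,\z)\,,
\end{equation*}
where $A(t,\z) = \sum_j \z^j A_j(t)$ is the natural complexification of the symbol. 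Here one must be a little careful: $\what u(t,\cdot)$ is a priori only a holomorphic functional, but for each fixed $\z$ the pairing $\what u(t,\z) = \langle u(t), h_\z\rangle$ is a genuine $\C^m$-valued function, continuous in $t$ by Theorem \ref{th:CK}, and the displayed ODE holds in the sense of $L^1$-derivatives (an inspection of $Lu=f$, exactly as in the regularity remark after Theorem \ref{th:CK}).

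Next I would introduce the (complex, $\xi$-frozen) energy
\begin{equation*}
E_\veps(t,\z) \,:=\, \bigl(S_\veps(t,\xi)\,\what u(t,\z)\bigr)\cdot \what u(t,\z)\,,
\end{equation*}
which, by point 1 of Lemma \ref{l:S_e}, satisfies $\lambda\,|\what u(t,\z)|^2 \le E_\veps(t,\z) \le \Lambda\,|\what u(t,\z)|^2$. Differentiating in $t$ and using the ODE gives three types of terms: the term $\bigl((\d_t S_\veps)\what u\bigr)\cdot\what u$, controlled by $|\d_t S_\veps(t,\xi)|_{\mc M}\,|\what u|^2$; the ``$f$-terms'' $(S_\veps \what f)\cdot\what u + (S_\veps\what u)\cdot\what f$, controlled by $2\sqrt\Lambda\,|\what f|\,|\what u| \le 2\sqrt\Lambda\,|\what f|\,\lambda^{-1/2}E_\veps^{1/2}$; and the commutator-type term coming from $A(t,\z)$. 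The heart of the matter is this last term. Writing $A(t,\z) = A(t,\xi) + i\,A(t,\eta)$ with $A(t,\xi), A(t,\eta)$ real matrices, the contribution of $A(t,\xi)$ is
\begin{equation*}
-\,i\,\bigl(S_\veps(t,\xi)A(t,\xi)\what u\bigr)\cdot\what u \,+\, i\,\bigl(S_\veps(t,\xi)\what u\bigr)\cdot\bigl(A(t,\xi)\what u\bigr)\,,
\end{equation*}
and here I would insert and subtract the true symmetrizer: since $S(t,\xi)A(t,\xi)$ is self-adjoint, the piece with $S$ in place of $S_\veps$ cancels exactly, leaving only $\bigl((S_\veps - S)A(t,\xi)\what u\bigr)\cdot\what u$-type terms, bounded by $2\,|S_\veps(t,\xi)-S(t,\xi)|_{\mc M}\,|A(t,\xi)|_{\mc M}\,|\what u|^2 \le 2\,|S_\veps-S|_{\mc M}\,\alpha(t)\,|\xi|\,|\what u|^2$. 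The contribution of the imaginary part $i\,A(t,\eta)$ is not antisymmetric and is simply bounded in modulus by $2\,|S_\veps(t,\xi)|_{\mc M}\,|A(t,\eta)|_{\mc M}\,|\what u|^2 \le 2\Lambda\,\alpha(t)\,|\eta|\,|\what u|^2$. Collecting everything and dividing by $E_\veps^{1/2}$ (or just estimating $\d_t E_\veps \le (\cdots) E_\veps + (\cdots)E_\veps^{1/2}$) produces
\begin{equation*}
\frac{d}{dt}\,E_\veps(t,\z)^{1/2} \,\le\, \frac12\,\vphi_\veps(t,\z)\,E_\veps(t,\z)^{1/2} \,+\, \frac{\sqrt\Lambda}{\sqrt\lambda}\,|\what f(t,\z)|\,,
\end{equation*}
with exactly the $\vphi_\veps$ of the statement (the factor $2/\sqrt\lambda$ in the first two terms and $2\sqrt\Lambda$ in the third come out of converting $|\what u|^2 \le \lambda^{-1}E_\veps$ and packaging the $\Lambda$, respectively — I would keep track of constants carefully here since they feed into the propagation radius).

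Finally I would integrate this differential inequality via the integrating factor $\exp\bigl(-\tfrac12\int_0^t\vphi_\veps\bigr)$ and Gronwall's lemma to obtain
\begin{equation*}
E_\veps(t,\z)^{1/2} \,\le\, \exp\!\left(\tfrac12\int_0^t\vphi_\veps(\tau,\z)\,d\tau\right)\left(E_\veps(0,\z)^{1/2} + \frac{\sqrt\Lambda}{\sqrt\lambda}\int_0^T|\what f(\tau,\z)|\,d\tau\right),
\end{equation*}
and then convert back using $\sqrt\lambda\,|\what u(t,\z)| \le E_\veps(t,\z)^{1/2}$ and $E_\veps(0,\z)^{1/2} \le \sqrt\Lambda\,|\what u_0(\z)|$, which yields the claimed inequality with $C$ an absolute constant (one may absorb all numerical factors into $C$). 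The main obstacle, as indicated, is the bookkeeping of the term coming from the symbol $A(t,\z)$: one must split $\z$ into its real and imaginary parts, exploit the self-adjointness of $SA$ to cancel the leading real part against the true symmetrizer, and carefully track which error is multiplied by $|\xi|$ (the $S_\veps - S$ defect) versus $|\eta|$ (the genuinely non-symmetric imaginary part). A secondary technical point is justifying the differentiation of $E_\veps$ in $t$ when $u$ is only $L^1$ in time through the coefficients and only a functional in $x$ — but since we have frozen $\z$, $\what u(\cdot,\z)$ is an absolutely continuous $\C^m$-valued function and all manipulations are elementary; the smoothness of $S_\veps$ in $t$ (with the $L^1$ bound on $\d_t S_\veps$ from Lemma \ref{l:S_e}) is precisely what makes $E_\veps$ differentiable.
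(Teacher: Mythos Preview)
Your approach is essentially the same as the paper's: define the approximate energy $E_\veps(t,\z)=S_\veps(t,\xi)\what u(t,\z)\cdot\what u(t,\z)$, differentiate, split $A(t,\z)=A(t,\xi)+iA(t,\eta)$, exploit the self-adjointness of $S(t,\xi)A(t,\xi)$ to cancel the leading real part (leaving the $S_\veps-S$ defect), bound the $\eta$-part crudely, and close by Gronwall on $E_\veps^{1/2}$. One small point you do not mention: the whole construction requires $\xi\neq0$ (the symmetrizer is only defined there), and the paper treats the degenerate case $\xi=0$ separately at the very end by a direct estimate on $|\what u|^2$ --- this is easy but should be noted.
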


\begin{rem} \label{r:speed-est}
Notice that the previous proposition holds true even without the additional hypotheses $(i)$-$(ii)$ of Theorem \ref{th:speed}. Indeed, these conditions are
needed just in the final part of the proof, to control $\vphi_\veps$ in the argument of the exponential factor.
\end{rem}

\begin{proof}[Proof of Proposition \ref{p:speed-est}]
Let us introduce an approximated energy for $u$, defined in Fourier variables: for $\z=\xi+i\eta$, we set
\begin{equation} \label{eq:energy}
E_\veps(t,\z)\,:=\,S_\veps(t,\xi)\,\what{u}(t,\z)\,\cdot\,\what{u}(t,\z)\,.
\end{equation}
We remark that it is important to keep $S_\veps$ depending just on $\Re\z=\xi$, not to lose self-adjointness\footnote{For instance, already for $m=1$, take the function
$S(\xi)=\xi/|\xi|$: then $S(\z)$ is well-defined, but it is no more self-adjoint.}
and symmetrizability\footnote{We know that $S(t,\xi)$ is a symmetrizer for $A(t,\xi)$, but this does not imply that $S(t,\z)$ is
a symmetrizer for $A(t,\z)=A(t,\xi)+iA(t,\eta)$.} properties.

Let us point out that, for all $(t,\z)\,\in\,\R\times\C^n$, with $\Re\z=\xi\neq0$, one has
\begin{equation} \label{est:en-equiv}
\lambda\,\left|\what{u}(t,\z)\right|^2\,\leq\,E_\veps(t,\z)\,\leq\,\Lambda\,\left|\what{u}(t,\z)\right|^2\,.
\end{equation}

Next, let us compute the time derivative of the approximated energy $E_\veps$. Keep in mind that, from equation $Lu=f$, if
we pass in the Phase Space we get
\begin{equation} \label{eq:fourier}
\d_t\what{u}(t,\z)\,+\,i\,A(t,\z)\,\what{u}(t,\z)\,=\,\what{f}(t,\z)\,,
\end{equation}
where, by \eqref{def:symbol}, we have denoted $A(t,\z)=A(t,\xi)+iA(t,\eta)$. Therefore, differentiating \eqref{eq:energy}
with respect to time we find
\begin{eqnarray*}
\d_tE_\veps & = & \d_tS_\veps\what{u}\cdot\what{u}\,+\,2\,\Re\left(S_\veps\d_t\what{u}\cdot\what{u}\right) \\
& = & \d_tS_\veps\what{u}\cdot\what{u}\,+\,2\,\Re\left(S_\veps\what{f}\cdot\what{u}\right)\,+\,
2\,\Re\left(-iS_\veps A\what{u}\cdot\what{u}\right)\,.
\end{eqnarray*}
Recall that, in the last term, $S_\veps=S_\veps(t,\xi)$ while $A=A(t,\z)$.

We start estimating each term in the right-hand side of the last equation. First of all, applying Cauchy-Schwarz inequality
to the $S_\veps$ scalar product, we find
$$
\left|\Re\left(S_\veps\what{f}\cdot\what{u}\right)\right|\,\leq\,\left(S_\veps\what{f}\cdot\what{f}\right)^{1/2}\,
\left(S_\veps\what{u}\cdot\what{u}\right)^{1/2}\,
$$
which immediately implies the bound
\begin{equation} \label{est:f}
\left|2\,\Re\left(S_\veps\what{f}\cdot\what{u}\right)\right|\,\leq\,2\,\sqrt{\Lambda}\,\bigl|\what{f}\,\bigr|\,\left(E_\veps\right)^{1/2}\,.
\end{equation}

Let us consider now the term $\d_tS_\veps\what{u}\cdot\what{u}$: writing $\d_tS_\veps=\d_tS_\veps\,(S_\veps)^{-1}\,S_\veps$
and recalling Remark \ref{r:inverse}, we easily get the estimate
\begin{equation} \label{est:d_tS}
\left|\d_tS_\veps\what{u}\cdot\what{u}\right|\,\leq\,\frac{2}{\sqrt{\lambda}}\,\left|\d_tS_\veps\right|_{\mc M}\,E_\veps\,.
\end{equation}

Finally, we deal with the last term which appears in the expression for $\d_tE_\veps$. Decomposing $A(t,\z)=A(t,\xi)+iA(t,\eta)$
and recalling that $S(t,\xi)\,A(t,\xi)$ is self-adjoint, we infer
\begin{eqnarray*}
\Re\left(-iS_\veps(t,\xi)A(t,\z)\what{u}\cdot\what{u}\right) & = & \Re\left(-iS_\veps(t,\xi)A(t,\xi)\what{u}\cdot\what{u}\right)+
\Re\left(S_\veps(t,\xi)A(t,\eta)\what{u}\cdot\what{u}\right) \\
& = & \Re\left(-i\bigl(S_\veps(t,\xi)-S(t,\xi)\bigr)A(t,\xi)\what{u}\cdot\what{u}\right)+
\Re\left(S_\veps(t,\xi)A(t,\eta)\what{u}\cdot\what{u}\right).
\end{eqnarray*}
The former term in the right-hand side of this last equality can be controlled by the quantity
$$
\frac{1}{\sqrt{\lambda}}\,\left|S_\veps(t,\xi)-S(t,\xi)\right|_{\mc M}\,\sum_j\left|A_j(t)\right|_{\mc M}\,|\xi|\,E_\veps\,;
$$
as for the latter, instead, we have
$$
\left|\Re\left(S_\veps(t,\xi)A(t,\eta)\what{u}\cdot\what{u}\right)\right|\,\leq\,\sqrt{\Lambda}\,\sum_j\left|A_j(t)\right|_{\mc M}\,
|\eta|\,E_\veps\,.
$$
Putting these last inequalities together gives us
\begin{eqnarray} \label{est:S_e-S}
\left|2\,\Re\left(-iS_\veps A\what{u}\cdot\what{u}\right)\right| & \leq &
\frac{2}{\sqrt{\lambda}}\,\left|S_\veps(t,\xi)-S(t,\xi)\right|_{\mc M}\,\sum_j\left|A_j(t)\right|_{\mc M}\,|\xi|\,E_\veps\,+   \\
& & \qquad\qquad\qquad\qquad\qquad\qquad\qquad +\,2\,\sqrt{\Lambda}\,\sum_j\left|A_j(t)\right|_{\mc M}\,|\eta|\,E_\veps\,. \nonumber
\end{eqnarray}

Therefore, from \eqref{est:f}, \eqref{est:d_tS} and \eqref{est:S_e-S} we deduce the estimate
\begin{eqnarray*}
\d_tE_\veps & \leq & 2\,\sqrt{\Lambda}\,\bigl|\what{f}\,\bigr|\,\left(E_\veps\right)^{1/2}\,+\,\frac{2}{\sqrt{\lambda}}\,\left|\d_tS_\veps\right|_{\mc M}\,E_\veps\,+ \\
& & \quad +\left(\frac{2}{\sqrt{\lambda}}\,\left|S_\veps(t,\xi)-S(t,\xi)\right|_{\mc M}\,\alpha(t)\,|\xi|\,+\,
2\,\sqrt{\Lambda}\,\alpha(t)\,|\eta|\right)E_\veps\,.
\end{eqnarray*}

Now, if we define $e_\veps(t,\z)\,=\,\bigl(E_\veps(t,\z)\bigr)^{1/2}$, an application of Gronwall Lemma to the previous inequality allows us to find
$$
e_\veps(t,\z)\,\leq\,e^{\int_0^t\vphi_\veps(\tau,\z)\,d\tau}\,\left(e_\veps(0,\z)\,+\,2\,\sqrt{\Lambda}\,\int_0^t\left|\what{f}(\tau,\z)\right|\,d\tau\right)\,;
$$
thanks to \eqref{est:en-equiv}, we obtain the desired estimate for $\Re\z\neq0$.

On the other hand, setting $S(t,0)=0$, the same estimate still holds true when $\Re\z=\xi=0$. Indeed, it is enough to multiply equation \eqref{eq:fourier}
by $\what u$ and use Gronwall inequlity again, recalling that $\Lambda/\lambda\geq1$.
\end{proof}

\subsection{End of the proof} \label{ss:end}
Let us come back to the proof of Theorem \ref{th:speed}.

Our starting point is the inequality established in Proposition \ref{p:speed-est} above. By our hypotheses, combining it with Theorem \ref{th:PW}, we obtain the
following property: for any $\delta>0$ fixed, there exists a constant $C_\delta>0$ such that
$$
\left|\what{u}(t,\z)\right|\,\leq\,C\,C_\delta\,\frac{\sqrt{\Lambda}}{\sqrt{\lambda}}\,\exp\left(\int_0^t\vphi_\veps(\tau,\z)\,d\tau\right)\,
\exp\bigl(\delta\,|\xi|\,+\,(r_0+\delta)\,|\eta|\bigr)
$$
for any $\z=\xi+i\eta\,\in\,\C^n$, with $|\z|\geq1$. So, our next goal is to find suitable bounds for the integral $\int_0^t\vphi_\veps(\tau,\z)\,d\tau$,
or better for the quantities
\begin{eqnarray*}
\mc I_1 & := & \frac{2}{\sqrt{\lambda}}\,\int^T_0\left|\d_tS_\veps(t,\xi)\right|_{\mc M}\,dt \\
\mc I_2 & := & \frac{2\,|\xi|}{\sqrt{\lambda}}\,\int^T_0\left|S_\veps(t,\xi)-S(t,\xi)\right|_{\mc M}\,\alpha(t)\,dt\,.
\end{eqnarray*}

Before proceeding, let us make the fundamental choice
$$
\veps\,=\,\frac{1}{|\z|}\,.
$$
We also remark that, without loss of generality, we can limit ourselves to consider the case $\xi\neq0$, the instance $\xi=0$ being actually simple
(recall that we have set $S(t,\xi)=0$ when $\xi=0$).

First of all, from item $2.$ of Lemma \ref{l:S_e} we immediately deduce the estimate
\begin{equation} \label{est:I_1}
\mc I_1\,\leq\,C\,\bigl(|\z|\,\omega_S(\xi,1/|\z|)\,+\,1\bigr)\,,
\end{equation}
for a new positive constant $C$ also depending on $\lambda$ and $\Lambda$.

As for $\mc I_2$, let us start by assuming hypothesis $(ii)$ of Theorem \ref{th:speed} for a while. By Lemma \ref{l:S_e} again, we get
\begin{equation} \label{est:I_2}
\mc I_2\,\leq\,C\,|\xi|\,\left(\omega_S(\xi,1/|\z|)\,+\,|\z|^{-1}\,\right)\,\leq\,C\,\bigl(|\z|\,\omega_S(\xi,1/|\z|)\,+\,1\bigr)\,,
\end{equation}
where, this time, the constant $C$ depends on $\lambda$, $\Lambda$ and $C_0$.

Thanks to \eqref{est:I_1} and \eqref{est:I_2}, we find, for new suitable constants $C$ and $C_\delta$,
\begin{eqnarray*}
\left|\what{u}(t,\z)\right| & \leq & C_\delta\,\exp\bigl(C\,|\z|\,\omega_S(\xi,1/|\z|)\bigr)\,
\exp\biggl(\delta\,|\xi|\,+\,\bigl(r(t)+\delta\bigr)\,|\eta|\biggr) \\
& \leq & C_\delta\;\mc{E}_\delta(\zeta)\;\exp\biggl(2\delta\,|\xi|\,+\,\bigl(r(t)+2\delta\bigr)\,|\eta|\biggr)\,
\end{eqnarray*}
where, after setting $\wtilde{\omega}_S(1/|\z|)\,:=\,\sup_{|\nu|=1}\omega_S(\nu,1/|\z|)$, we have defined
$$
\mc{E}_\delta(\z)\,:=\,\exp\biggl(C\,\bigl(\wtilde{\omega}_S(1/|\z|)\,-\,\delta\bigr)\,|\z|\biggr)\,.
$$

Now, recalling that $\wtilde{\omega}_S(1/|\z|)\,\longrightarrow\,0$ for $|\z|\,\ra\,+\infty$, one easily deduces that the function
$\mc{E}_\delta(\z)$ is bounded on $\left\{|\z|\geq1\right\}$, for all $\delta>0$ fixed. Hence the previous inequality becomes
$$
\left|\what{u}(t,\z)\right|\,\leq\,C'\,C_\delta\;\exp\biggl(2\delta\,|\xi|\,+\,\bigl(r(t)+2\delta\bigr)\,|\eta|\biggr)\,,
$$
for a new positive constant $C'$.
Keeping in mind Theorem \ref{th:PW}, this estimate concludes the proof of Theorem \ref{th:speed} under hypothesis $(ii)$.

\medbreak
Under hypothesis $(i)$, the only thing which changes is that inequlity \eqref{est:I_2} does not hold true anymore. Then, we need a new control on the $\mc I_2$ term.

First of all, we remark that, being $S$ continuous on the compact set $[0,T]\times\mbb{S}^{n-1}$, it is uniformly continuous.
So, for $\delta>0$ fixed above (given by Paley-Wiener Theorem), there exists  an $\veps_\delta>0$ such that, if $\veps\leq\veps_\delta$, then
$$
\left|S_\veps(t,\xi)-S(t,\xi)\right|_{\mc M}\,=\,
\frac{1}{\veps}\left|\int_{|\tau|\leq\veps}
\rho(\tau/\veps)\,\bigl(S(t+\tau,\xi)-S(t,\xi)\bigr)\,d\tau\right|_{\mc M}\,\leq\,C\,\delta\,.
$$
Remark that $\veps_\delta$ and $C$ are both independent of $\xi$.

Therefore, if $|\z|\geq1/\veps_\delta$ one gathers
$$
\mc I_2\,\leq\,C\,\alpha(T)\,|\xi|\,\delta\,\leq\,C\,\delta\,|\xi|\,,
$$
where again $C$ depends on $\lambda$. On the other hand, whenever $|\z|\leq1/\veps_\delta$, then it is easy to see that
$\mc I_2\,\leq\,C$, for some constant $C=C(\delta)$ depending also on $\delta$ (and on $\Lambda$, $\lambda$).

In the end, putting this inequality together with \eqref{est:I_1}, we arrive at the bound
$$
\left|\what{u}(t,\z)\right|\,\leq\,C_\delta\;\mc{E}_\delta(\zeta)\;\exp\biggl((2+C)\delta\,|\xi|\,+\,\bigl(r(t)+2\delta\bigr)\,|\eta|\biggr)\,,
$$
which allows us to conclude as before.

Theorem \ref{th:speed} is now completely proved.


\section{Local theory in analytic functional classes} \label{s:local}

The main goal of the present paragraph is to derive local uniqueness of solutions to problem \eqref{eq:Cauchy}, where $L$ is the operator
defined in \eqref{def:Lu}.

First of all, we need the counterpart of Theorem \ref{th:speed} for data which are not compactly supported in $\R^n$. We limit ourselves to
state an existence result in the space of real-analytic functions, which is the only required part for our next study.

\begin{thm} \label{th:A}
 Let $L$ be the operator defined by formula \eqref{def:Lu}, with coefficients $\bigl(A_j\bigr)_j$ 
verifying property \eqref{hyp:coeff-L1}. Suppose that $L$ is microlocally symmetrizable, in the sense of Definition \ref{d:micro_symm},
and let $S$ be its symmetrizer. \\
Suppose also that either condition $(i)$ or condition $(ii)$ of Theorem \ref{th:speed} is fulfilled. \\
Finally, assume that $u_0\in\mc{A}$ and $f\in L^1\bigl([0,T];\mc{A}\bigr)$.

Then, there exists a solution $u$ to problem \eqref{eq:Cauchy}, with $u\in\mc{C}\bigl([0,T];\mc A\bigr)$.
\end{thm}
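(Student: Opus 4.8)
The plan is to obtain the solution on the full space $\R^n$ by exhausting $\mc{A}$-data with compactly supported ones and passing to the limit, using the finite speed of propagation from Theorem \ref{th:speed} to control where the approximate solutions live. Concretely, fix a smooth cut-off sequence: let $\chi\in\mc{C}^\infty_0(\R^n)$ with $\chi\equiv1$ on $B(1)$ and $\Supp\chi\subset B(2)$, and set $\chi_k(x):=\chi(x/k)$. Define $u_0^k:=\chi_k\,u_0\in\mc{A}'$ (in fact in $\mc{C}^\infty_0$, hence in $\mc{A}'$ with support in $B(2k)$) and $f^k(t):=\chi_k\,f(t)$, which lies in $L^1\bigl([0,T];\mc{A}'\bigr)$ with $\Supp f^k(t)\subset B(2k)$. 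By Theorem \ref{th:speed}, the Cauchy problem \eqref{eq:Cauchy} with data $(u_0^k,f^k)$ has a unique solution $u^k\in\mc{C}\bigl([0,T];\mc{A}'\bigr)$, with $\Supp u^k(t)\subset B\bigl(2k+r(T)\bigr)$, where $r(T)=2\sqrt{\Lambda}\int_0^T\alpha(\tau)\,d\tau$ is finite by \eqref{hyp:coeff-L1}. This last bound is the crucial point: it is finite and, for fixed $t$, the radius grows only by the $k$-independent additive constant $r(T)$.

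The second step is to identify the limit and locate it in $\mc{C}\bigl([0,T];\mc{A}\bigr)$. I would argue that on any fixed ball the $u^k$ eventually stabilize. Fix $R>0$; for $k,l$ large enough that $\chi_k=\chi_l=1$ on $B(R+r(T))$, the difference $w:=u^k-u^l$ solves $Lw=0$ on $[0,T]\times\R^n$ with $w_{|t=0}=(\chi_k-\chi_l)u_0$, which vanishes on $B(R+r(T))$; hence $\Supp w_{|t=0}\cap B(R+r(T))=\emptyset$. Applying Theorem \ref{th:speed} to $w$ — more precisely, a local version of it obtained by subtracting from $u_0$ a compactly supported analytic functional agreeing with it near $\oline{B(R+r(T))}$, or simply by translating the support estimate — one gets that $\Supp w(t)$ stays outside $B(R)$ for all $t\in[0,T]$. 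Therefore $u^k(t)$ restricted to $B(R)$ is independent of $k$ once $k$ is large, which lets us define $u(t)$ on all of $\R^n$ by patching: $u(t)_{|B(R)}:=u^k(t)_{|B(R)}$ for any sufficiently large $k$. By construction $u$ solves $Lu=f$ with $u_{|t=0}=u_0$ locally, hence globally, and continuity in time into $\mc{A}'$ (and then, by elliptic-type analytic regularity of $u^k$, into $\mc{A}$) follows from that of each $u^k$. One has to check here that each $u^k$ is actually $\mc{C}\bigl([0,T];\mc{A}\bigr)$-valued, not merely $\mc{A}'$-valued; this is where the hyperbolicity, the symmetrizer, and the smoothness of $S$ in $\xi$ come back in, via the phase-space estimate of Proposition \ref{p:speed-est} applied to the regular part and the Paley-Wiener characterization of analyticity (decay of $\what{u}^k(t,\xi)$ faster than any $e^{-\delta|\xi|}$).

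I expect the main obstacle to be precisely this last regularity point — upgrading from analytic functionals to analytic functions — and the bookkeeping needed to make the support-based patching rigorous in $\mc{A}'(\Omega)$, where ``restriction to a ball'' and ``support outside a ball'' must be handled through the duality definition recalled in Subsection~\ref{ss:def-hyp} rather than pointwise. A clean way around it is to note that the argument of Proposition \ref{p:speed-est}, combined with the choice $\veps=1/|\z|$ and hypothesis $(i)$ or $(ii)$ exactly as in Subsection \ref{ss:end}, yields for data in $\mc{A}$ (with the relevant Paley-Wiener bounds on $\what{u}_0$ and $\what f$, namely subexponential growth in $\xi$ and exponential in $\eta$ with \emph{any} rate) a matching bound on $\what{u}(t,\z)$, which by the entire-function version of Paley-Wiener places $u(t)$ in $\mc{A}$ directly. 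In the interest of brevity I would present the proof in this second, more self-contained form: rerun the energy estimate of Section~\ref{s:speed} verbatim, observe that the only place compact support of the data was used was in invoking part (II) of Theorem~\ref{th:PW}, replace it by the analyticity characterization of $\mc{A}$-functions through Fourier growth, and conclude. The details being entirely parallel to \cite{C-DG-S}, I would omit the routine verifications.
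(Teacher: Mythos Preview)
Both of your proposed routes differ from the paper's, and both have genuine gaps.

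The paper's argument (following \cite{C-DG-S}, Theorem~4) is pure \emph{duality}: one defines $u(t)\in\mc{A}$ by prescribing its pairing with an arbitrary compactly supported $v_t\in\mc{A}'$. Solve the backward adjoint problem $L^*v=0$, $v(t)=v_t$; Lemma~\ref{l:symm} guarantees $L^*$ is again microlocally symmetrizable, so Theorem~\ref{th:speed} applies and yields $v\in\mc{C}([0,t];\mc{A}')$ with uniformly compact support. Then
\[
\langle u(t),v_t\rangle\;:=\;\langle u_0,v(0)\rangle\;+\;\int_0^t\langle f(\tau),v(\tau)\rangle\,d\tau
\]
is well-defined because the pairings are between $\mc{A}$-functions and compactly supported analytic functionals. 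No Fourier transform of $u_0$ or $f$ is ever taken.

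Your first approach fails at the regularity upgrade. The cut-off $\chi_k u_0\in\mc{C}^\infty_0$ is \emph{not} analytic, so $\what{\chi_k u_0}(\xi)$ decays only faster than any polynomial, not like $e^{-\delta|\xi|}$; the Paley-Wiener argument you invoke to place $u^k(t)$ in $\mc{A}$ therefore does not go through, and you are stuck with $u^k(t)\in\mc{A}'$. The patching step is then ill-posed: analytic functionals do not restrict to open sets, so ``$u^k(t)|_{B(R)}$ is independent of $k$'' has no meaning in $\mc{A}'$. (Separately, deducing ``support stays \emph{outside} $B(R)$'' from Theorem~\ref{th:speed} is not a mere translation; it is essentially the domain-of-dependence Theorem~\ref{th:local-u}, which in the paper's logic comes \emph{after} Theorem~\ref{th:A}.)

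Your second approach fails earlier. A generic $u_0\in\mc{A}$ --- say $u_0(x)=1/(1+|x|^2)$, or $u_0(x)=e^{|x|^2}$ --- need not lie in $\mc{H}'$, so $\what{u}_0(\z)$ in the sense of the paper is undefined, Theorem~\ref{th:CK} does not furnish a solution to begin the estimate, and Proposition~\ref{p:speed-est} cannot be initiated. There is no Paley-Wiener-type characterization of \emph{all} real-analytic functions on $\R^n$ by the Fourier growth bounds you postulate; the ``subexponential in $\xi$, exponential in $\eta$ with any rate'' description applies to entire functions of exponential type, not to $\mc{A}(\R^n)$. The duality route avoids this entirely by keeping the Fourier analysis on the compactly supported (adjoint) side.
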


The proof of the previous statement goes along the main lines of the proof to Theorem 4 in \cite{C-DG-S}, and so it is omitted here. It is based on a duality argument
(keep in mind Lemma \ref{l:symm} stated above)
and the application of Theorem \ref{th:speed} proved before.

\begin{rem} \label{r:real}
We point out that, if $u_0$ and $f$ are real valued functionals (or functions), then the corresponing solution $u$, given by Theorem \ref{th:speed}
(respectively, by Theorem \ref{th:A}), is also real valued.

The proof of this fact is straightforward. In the $\mc{A}'$ case, since $u_0$ and $f$ are real valued, then both $\what{u}(t,\xi)$ and $\oline{\what{u}(t,-\xi)}$
are solutions to \eqref{eq:fourier} with $\eta=0$ (i.e. $\zeta=\xi$). Hence, the conclusion follows by the uniqueness part in Theorem \ref{th:CK}.
In the $\mc A$ case, one has to argue by duality, as in \cite{C-DG-S} (see Remark 6 of that paper).
\end{rem}

Thanks to the previous theorem, we can get a result on \emph{continuous dependence} of the solution on the data.
\begin{lemma} \label{l:dependence}
 Let $L$ be the operator defined by formula \eqref{def:Lu}, with coefficients $\bigl(A_j\bigr)_j$ 
verifying property \eqref{hyp:coeff-L1}. Suppose that $L$ is microlocally symmetrizable, in the sense of Definition \ref{d:micro_symm},
and let $S$ be its symmetrizer. \\
Suppose also that either condition $(i)$ or condition $(ii)$ of Theorem \ref{th:speed} is fulfilled. \\
Let $r_0>0$ a real number, and denote
\begin{equation} \label{def:rho}
\vrho(t)\,:=\,r_0\,-\,2\sqrt{\Lambda}\int_0^t\alpha(\tau)\;d\tau\,,
\end{equation}
where the function $\alpha(t)$ is defined in Theorem \ref{th:speed}. Set $B_0=B(r_0)$ and $B_t=B\bigl(\vrho(t)\bigr)$. \\
Finally, take sequences $\bigl(u_0^k\bigr)_k\subset\mc{A}$ and $\bigl(f^k)_k\subset L^1\bigl([0,T];\mc{A}\bigr)$, and let
$\bigl(u^k\bigr)_k\subset\mc{C}\bigl([0,T];\mc A\bigr)$ be the sequence of corresponding solutions, given by Theorem \ref{th:A}.

If $u_0^k\ra0$ in $\mc{A}(B_0)$ and $f^k\ra0$ in $L^1\bigl([0,T];\mc{A}(B_0)\bigr)$, then one has
$$
u^k\,\longrightarrow\,0\qquad\quad\mbox{ in }\qquad\mc{C}\bigl([0,t];\mc{A}(B_t)\bigr)\,,
$$
for all $t\in\,]0,T]$ such that $\vrho(t)>0$.
\end{lemma}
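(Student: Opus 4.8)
The plan is to prove this lemma as the finite-domain-of-dependence counterpart of Theorem \ref{th:speed}, by the duality argument of \cite{C-DG-S}: one transfers finite speed of propagation to the \emph{adjoint} operator $L^*$, which by Lemma \ref{l:symm} and Remark \ref{r:adj} satisfies the same hypotheses as $L$ (with symmetrizer $S^{-1}$), and reads off from it a backward cone of determinacy.

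First I would reduce the statement to a quantitative weak bound. Fix $t\in\,]0,T]$ with $\vrho(t)>0$ and a compact set $K\subset B_t$; since $\vrho$ is non-increasing, one has $K\subset B_t\subset B_\tau$ and $\vrho(\tau)\geq\vrho(t)>0$ for all $\tau\in[0,t]$, so we may pick $r'$ with $\max_{x\in K}|x|<r'<\vrho(t)$. It then suffices to estimate $|\langle u^k(\tau),\psi\rangle|$, uniformly for $\tau\in[0,t]$ and for $\psi$ ranging in a bounded family of analytic functionals supported in $B(r')$, by suitable seminorms of $u_0^k$ and $f^k$ in $\mc A(B_0)$: indeed, by the duality between $\mc A(B_t)$ and the compactly supported analytic functionals carried by $B_t$, together with the Montel character of these spaces, such a bound upgrades to convergence in the topology of $\mc C\bigl([0,t];\mc A(B_t)\bigr)$.

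The core step is then to construct the adjoint solution and the duality identity. For fixed $\tau$ and $\psi$ as above, I solve the backward problem $L^*v=0$ on $[0,\tau]$, $v_{|s=\tau}=\psi$; reversing time $s\mapsto\tau-s$ turns it into a forward Cauchy problem for an operator of type \eqref{def:Lu} (coefficients $A_j^*(\tau-\cdot)$, which still satisfy \eqref{hyp:coeff-L1} and condition $(i)$ or $(ii)$, symmetrizer $S^{-1}(\tau-\cdot,\cdot)$), to which Theorem \ref{th:speed} applies and yields $v\in\mc C\bigl([0,\tau];\mc A'\bigr)$ supported, at time $s$, in $B\bigl(r'+2\sqrt{\Lambda}\int_s^\tau\alpha(\sigma)\,d\sigma\bigr)$. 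By the very definition of $\vrho$, and since $r'<\vrho(\tau)=r_0-2\sqrt{\Lambda}\int_0^\tau\alpha$, this ball is contained in $B_0$ for every $s\in[0,\tau]$; moreover the time-uniform estimates in the proof of Theorem \ref{th:speed} provide bounds on the relevant seminorms of $v$ that are uniform in $\tau\in[0,t]$ and in $\psi$. Pairing $Lu^k=f^k$ against $v$ and integrating by parts in time on $[0,\tau]$ — licit because $A_j$ depends on time only, so the spatial terms pair exactly through the adjoint matrices, and because $u^k$, $v$ have the required regularity and $v$ has uniformly compact support — gives, using $L^*v=0$ and $u^k(0)=u_0^k$,
$$
\langle u^k(\tau),\psi\rangle\,=\,\langle u_0^k,\,v(0)\rangle\,+\,\int_0^\tau\langle f^k(s),\,v(s)\rangle\,ds\,.
$$
Since $v(0)$ and $v(s)$ (for a.e.\ $s$) are supported in $B_0$, the right-hand side is controlled by the $\mc A(B_0)$-seminorms of $u_0^k$ and $f^k$ times the uniform bounds on $v$; the hypotheses $u_0^k\to0$ in $\mc A(B_0)$ and $f^k\to0$ in $L^1\bigl([0,T];\mc A(B_0)\bigr)$ then yield the claim.

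I expect the main obstacle to lie not in the geometry of the cones but in the functional-analytic bookkeeping: making the duality pairing and the time integration by parts rigorous in the classes $\mc A'$ and $\mc A$ (in particular pinning down the time regularity of $v$), and — above all — passing from the weak statement ``$\langle u^k(\tau),\psi\rangle\to0$ for each test functional $\psi$'' to genuine convergence in the Fr\'echet--Schwartz topology of $\mc C\bigl([0,t];\mc A(B_t)\bigr)$, uniformly in $\tau$. This is precisely where one leans on the corresponding arguments of \cite{C-DG-S}.
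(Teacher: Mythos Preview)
Your proposal is correct and follows essentially the same duality route as the paper. The only difference is cosmetic: the paper tests against a forcing $g\in L^1\bigl([0,t];\mc A'(B_t)\bigr)$, solving $L^*v=g$ with $v(t)=0$, to obtain $\int_0^t\langle g,u^k\rangle\,d\tau\to0$ uniformly on bounded sets of such $g$ (and then upgrades from $L^\infty$ to $\mc C$ via the equation $Lu^k=f^k$), whereas you test at each fixed time $\tau$ against a terminal datum $\psi$, solving $L^*v=0$ with $v(\tau)=\psi$.
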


\begin{proof}
Fix $t\in\,]0,T]$ such that $\vrho(t)>0$. Let $g\in L^1\bigl([0,t];\mc{A}'(B_t)\bigr)$ and let $v\in\mc{C}\bigl([0,t];\mc{A}'\bigr)$
the solution of the dual system
$$ 
-\,\d_tv(\tau,x)\,-\,\sum_{j=1}^nA^*_j(\tau)\,\d_jv(\tau,x)\,=\,g(\tau,x)\qquad\mbox{ on }\quad [0,t]\times\R^n\,,
$$ 
with datum $v(t)=0$. Remark that the existence and uniqueness of $v$ is guaranteed by Theorem \ref{th:speed} (keep in mind also Remark \ref{r:adj} and
Lemma \ref{l:symm}), which in addition implies ${\rm supp}\,v\,\subset\,B_0$.

Applying $u^k$ to the previous expression and integrating in time, we find
$$
\int^t_0<g\,,\,u^k>\,d\tau\,=\,\int^t_0<v\,,\,f^k>\,d\tau\,+\,<v(0)\,,\,u_0^k>\quad\longrightarrow\;0
$$
for $k\ra+\infty$, due to our hypotheses. This convergence holds true for all $g$ belonging to the space $L^1\bigl([0,t];\mc{A}'(B_t)\bigr)$, and it is uniform
on bounded subsets  $\mc{B}$ of $L^1\bigl([0,t];\mc{A}'(B_t)\bigr)$. Indeed,
this immediately follows from Theorem \ref{th:speed}: if $g\in\mc B$, then the family of corresponding solutions $v_g$ to the adjoint system is bounded
in the space $\mc{C}\bigl([0,t];\mc{A}'\bigr)$, and this is still enough to recover the previous convergence for $k\ra+\infty$.

In the end, we deduce that $u^k\longrightarrow0$ in $L^\infty\bigl([0,t];\mc A(B_t)\bigr)$, and then also in $\mc{C}\bigl([0,t];\mc{A}(B_t)\bigr)$
(recalling that $u^k$ solves $Lu^k=f^k$).

The lemma is now proved.
\end{proof}

Thanks to the previous lemma, we can derive a result on the \emph{domain of dependence}, which yields to \emph{local uniqueness} of the solution
to problem \eqref{eq:Cauchy}.
\begin{thm} \label{th:local-u}
 Let $L$ be the operator defined by formula \eqref{def:Lu}, with coefficients $\bigl(A_j\bigr)_j$ 
verifying property \eqref{hyp:coeff-L1}. Suppose that $L$ is microlocally symmetrizable, in the sense of Definition \ref{d:micro_symm},
and let $S$ be its symmetrizer. \\
Suppose also that either condition $(i)$ or condition $(ii)$ of Theorem \ref{th:speed} is fulfilled. \\
Let $u_0\in\mc{A}'$ and $f\in L^1\bigl([0,T];\mc{A}'\bigr)$, and let
$u\in\mc{C}\bigl([0,T];\mc A'\bigr)$ be the corresponding solution to problem \eqref{eq:Cauchy} (given by Theorem \ref{th:speed}).

Assume that $u_0(x)\equiv0$ and $f(t,x)\equiv0$ for $|x-x_0|<r_0$, for some $r_0>0$ and point $x_0\in\R^n$ fixed. Then
$$
u(t,x)\equiv0\qquad\qquad\mbox{ for all }\qquad |x-x_0|\,<\,r_0\,-\,2\,\sqrt{\Lambda}\int^t_0\alpha(\tau)\,d\tau\,,
$$
for all $t\in\,]0,T]$ such that $\int^t_0\alpha(\tau)\,d\tau\,<\,r_0/\bigl(2\,\sqrt{\Lambda}\bigr)$. 
\end{thm}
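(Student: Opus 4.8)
The plan is to argue by duality with the backward Cauchy problem for the adjoint operator $L^\ast$, exactly in the spirit of the proof of Lemma \ref{l:dependence}.

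First I would use the translation invariance of $L$ (the $A_j$ depending on $t$ only): replacing $u_0(\cdot),f(\tau,\cdot)$ by $u_0(\cdot+x_0),f(\tau,\cdot+x_0)$ reduces the statement to $x_0=0$. Fix $t\in\,]0,T]$ with $\int_0^t\alpha(\tau)\,d\tau<r_0/\bigl(2\sqrt{\Lambda}\bigr)$ and set, for $0\le\tau\le t$,
\begin{equation*}
\varrho(\tau)\,:=\,r_0\,-\,2\sqrt{\Lambda}\int_0^\tau\alpha(s)\,ds\,,
\end{equation*}
which is non-increasing, with $\varrho(0)=r_0$ and $\varrho(t)>0$; the aim is to show that ${\rm supp}\,u(t)\cap B\bigl(\varrho(t)\bigr)=\emptyset$. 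I would record a few preliminary facts. Since the hypotheses of Theorem \ref{th:speed} are in force, $u_0$ and the $f(\tau)$ are compactly supported analytic functionals, contained in a fixed ball $B(R_0)$ and, by assumption, in $\{|x|\ge r_0\}$; moreover ${\rm supp}\,u(t)\subset\overline{B(R)}$ with $R:=R_0+2\sqrt{\Lambda}\int_0^t\alpha$. Finally, rescaling the symmetrizer (which leaves all the hypotheses unchanged) I would assume $\lambda\,\Lambda=1$; then, by Lemma \ref{l:symm} and Remark \ref{r:adj}, $L^\ast$ is again microlocally symmetrizable, with symmetrizer $S^{-1}$, the \emph{same} function $\alpha$, and the same propagation speed $2\sqrt{\Lambda}=2\sqrt{\lambda^{-1}}$, so that Theorems \ref{th:speed} and \ref{th:A} and Lemma \ref{l:dependence} all apply to $L^\ast$.

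Now, given $\veps\in\,]0,\,r_0-\varrho(t)[$, I would test $u(t)$ against a sequence $(\psi^k)_k\subset\mc A$ ``concentrated in $B(\varrho(t)-\veps)$'', i.e.\ with $\psi^k\to0$ in $\mc A(U)$ as $k\to+\infty$, where $U:=\{\varrho(t)-\veps<|x|<R+1\}$ (for instance suitable scaled Gaussians centred in $B(\varrho(t)-\veps)$). For each $k$, let $v^k\in\mc C\bigl([0,t];\mc A\bigr)$ be the solution of
\begin{equation*}
-\,\d_\tau v^k\,-\,\sum_{j=1}^nA_j^\ast(\tau)\,\d_jv^k\,=\,0\quad\text{ on }[0,t]\times\R^n\,,\qquad v^k(t)=\psi^k\,,
\end{equation*}
provided by Theorem \ref{th:A} applied to $L^\ast$ (after reversing time). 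Because the coefficients are $x$-independent, integrating $\tfrac{d}{d\tau}\langle u(\tau),v^k(\tau)\rangle$ over $[0,t]$ and integrating by parts in $x$ — the first order terms cancelling, since $\langle A_j\d_ju,v^k\rangle=-\langle u,A_j^\ast\d_jv^k\rangle$ — gives the Green identity
\begin{equation*}
\langle u(t),\psi^k\rangle\,=\,\langle u_0,v^k(0)\rangle\,+\,\int_0^t\langle f(\tau),v^k(\tau)\rangle\,d\tau\,.
\end{equation*}
I would then invoke finite propagation speed for $L^\ast$ — equivalently, the domain of dependence underlying Lemma \ref{l:dependence}, applied on small balls (re-centred by translation) covering the compact sets ${\rm supp}\,u_0$ and ${\rm supp}\,f(\tau)$. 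Propagating backward from time $t$ to time $\tau$, at speed $2\sqrt{\Lambda}$, keeps the annulus $\{r_0\le|x|\le R_0\}$ inside the (open) region where $v^k(\tau)\to0$ in $\mc A$: this is exactly ensured by $\varrho(t)-\veps+2\sqrt{\Lambda}\int_0^t\alpha=r_0-\veps<r_0$ and $R=R_0+2\sqrt{\Lambda}\int_0^t\alpha$, together with $\int_\tau^t\alpha\le\int_0^t\alpha$. Since that region is then a neighbourhood of ${\rm supp}\,u_0$ and of each ${\rm supp}\,f(\tau)$, one gets $\langle u_0,v^k(0)\rangle\to0$ and, by dominated convergence (the $v^k$ being uniformly bounded, by the estimates behind Theorems \ref{th:speed} and \ref{th:A}), $\int_0^t\langle f(\tau),v^k(\tau)\rangle\,d\tau\to0$; hence $\langle u(t),\psi^k\rangle\to0$. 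As this holds for every such $(\psi^k)_k$, the characterisation of the support of an analytic functional (recalled in Section \ref{s:prelim}) forces ${\rm supp}\,u(t)\cap B(\varrho(t)-\veps)=\emptyset$, and letting $\veps\downarrow0$ gives $u(t,x)\equiv0$ for $|x|<\varrho(t)$. Since $t$ was arbitrary with $\int_0^t\alpha<r_0/(2\sqrt{\Lambda})$, the theorem follows.

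The step I expect to be the main obstacle is not any analytic estimate — these are entirely provided by Theorems \ref{th:speed} and \ref{th:A} — but the bookkeeping with supports of analytic functionals: giving precise meaning and legitimacy to the pairings $\langle u_0,v^k(0)\rangle$, $\langle f(\tau),v^k(\tau)\rangle$ and to the Green identity in the class $\mc A'$, passing from the ball version of Lemma \ref{l:dependence} to the annular neighbourhoods above, and deducing the vanishing of $u(t)$ on $B(\varrho(t))$ from $\langle u(t),\psi^k\rangle\to0$ for all ``concentrated'' test sequences. This is standard and is carried out exactly as in \cite{C-DG-S}.
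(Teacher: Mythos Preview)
Your proposal is correct and follows essentially the same duality argument as the paper: solve the backward homogeneous adjoint problem $L^*v^k=0$ with final datum the test sequence, derive the Green identity, and use Lemma \ref{l:dependence} (applied to $L^*$) to propagate the convergence $v^k\to0$ from a neighbourhood of $\overline{B(\varrho(t)-\veps)}^{\,c}$ to a neighbourhood of $\overline{B(r_0-\veps)}^{\,c}$, where the hypotheses on $u_0$ and $f$ then kill the right-hand side. Your presentation differs only cosmetically---the reduction to $x_0=0$, the use of an annular neighbourhood (exploiting the already-known compact support of $u(t)$) instead of an exterior domain, and the normalization $\lambda\Lambda=1$ to match the propagation speeds of $L$ and $L^*$, the last of which is in fact a point the paper glosses over.
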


\begin{proof}
Let $\vrho(t)$ be defined as in Lemma \ref{l:dependence}, and let $t\in\,]0,T]$ such that $\vrho(t)>0$. We want to prove that
$u(t)=0$ in $\mc{A}'\bigl(B_t(x_0)\bigr)$, where we have denoted $B_t(x_0)=B\bigl(x_0,\vrho(t)\bigr)$ the ball of center $x_0$
and radius $\vrho(t)$.

This is equivalent to show the convergence
\begin{equation} \label{eq:to-prove}
<u(t)\,,\,w^k>\;\longrightarrow\;0
\end{equation}
for all sequence $\bigl(w^k\bigr)_k\,\subset\,\mc A$ such that (for some $\veps>0$)
$$
w^k\ra0\qquad\qquad \mbox{ in }\quad \mc A\left(\R^n\setminus\oline{B(x_0,\vrho(t)-\veps)}\right)\,.
$$

Then, fix a sequence $\bigl(w^k\bigr)_k$ as above, and let $v^k$ be the corresponding solution to the adjoint problem
$L^*v^k=0$ on $[0,t]\times\R^n$, with $L^*$ defined in \eqref{def:L*}, and initial datum $v^k(t)=w^k$. 

Thanks to Lemma \ref{l:dependence}, we deduce that $v^k\ra0$ in $\mc{C}\bigl([0,t];\mc A(U)\bigr)$, where, for notational convenience,
we have set $U\,:=\,\R^n\setminus\oline{B(x_0,r_0-\veps)}$.

On the other hand, let us apply $u$ to the system $L^*v^k=0$, and integrate over $[0,t]$: taking into account the initial conditions, direct
computations lead us to
$$
-\,<u(t)\,,\,w^k>\,+\,<u_0\,,\,v^k(0)>\,+\,\int^t_0<f\,,\,v^k>\,d\tau\,=\,0\,.
$$
Property \eqref{eq:to-prove} follows then from the hypotheses on $u_0$ and $f$.
\end{proof}

To conclude, we show a \emph{local existence and uniqueness} result in the space of analytic functions.
\begin{thm} \label{th:local-eu}
 Let $L$ be the operator defined by formula \eqref{def:Lu}, with coefficients $\bigl(A_j\bigr)_j$ 
verifying property \eqref{hyp:coeff-L1}. Suppose that $L$ is microlocally symmetrizable, in the sense of Definition \ref{d:micro_symm},
and let $S$ be its symmetrizer. \\
Suppose also that either condition $(i)$ or condition $(ii)$ of Theorem \ref{th:speed} is fulfilled. \\
Fix a point $x_0\in\R^n$ and a real number $r_0>0$, and denote $B_0=B(x_0,r_0)$. Defined the function $\vrho$ as in \eqref{def:rho}, set also $B_t=B\bigl(x_0,\vrho(t)\bigr)$. \\
Finally, assume that $u_0\in\mc{A}(B_0)$ and $f\in L^1\bigl([0,T];\mc{A}(B_0)\bigr)$.

Then, there exists a unique solution $u$ to problem \eqref{eq:Cauchy} on the conoid
$$
\Gamma\,:=\,\Bigl\{(t,x)\,\in\,[0,T]\times\R^n\;\bigl|\;\vrho(t)\,>\,0\;,\;|x-x_0|\,<\,\vrho(t)\Bigr\}\,,
$$
which belongs to the space $\mc{C}\bigl([0,t];\mc{A}(B_t)\bigr)$ for any time $t\in[0,T]$ such that $\vrho(t)>0$.
\end{thm}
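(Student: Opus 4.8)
The plan is to construct the solution on the conoid $\Gamma$ by patching together the global solution provided by Theorem \ref{th:A} with the finite-propagation-speed information of Theorem \ref{th:speed} and the local uniqueness of Theorem \ref{th:local-u}. First I would extend the data: since $u_0\in\mc{A}(B_0)$ and $f\in L^1\bigl([0,T];\mc{A}(B_0)\bigr)$ are defined (at least) on the ball $B_0=B(x_0,r_0)$, I would fix an intermediate radius $0<r_1<r_0$ and choose $\wtilde u_0\in\mc{A}$ and $\wtilde f\in L^1\bigl([0,T];\mc{A}\bigr)$, globally defined real-analytic representatives that agree with $u_0$ and $f$ on $B(x_0,r_1)$. (One cheap way: multiply by a suitable cut-off after recalling that analytic functions restricted to a smaller ball extend; alternatively one simply invokes that $\mc{A}(B_0)$ functions are, by definition, analytic on a neighbourhood of $\oline{B(x_0,r_1)}$, and uses a Whitney-type extension. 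Since the eventual conoid will be built from $r_1$, this is harmless.) Then Theorem \ref{th:A} yields a solution $\wtilde u\in\mc{C}\bigl([0,T];\mc{A}\bigr)$ of $L\wtilde u=\wtilde f$, $\wtilde u_{|t=0}=\wtilde u_0$.

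Next I would invoke Theorem \ref{th:local-u} (domain of dependence) to show that $\wtilde u$ restricted to the sub-conoid $\Gamma_1:=\{(t,x):\varrho_1(t)>0,\ |x-x_0|<\varrho_1(t)\}$, where $\varrho_1(t)=r_1-2\sqrt{\Lambda}\int_0^t\alpha$, depends only on $u_0$ and $f$ in $B(x_0,r_1)$, hence only on the original data; and that this restriction is independent of the chosen extension. Concretely: if $\wtilde u_0'$, $\wtilde f'$ is another extension agreeing with the data on $B(x_0,r_1)$, then $w:=\wtilde u-\wtilde u'$ solves $Lw=\wtilde f-\wtilde f'$ with $w_{|t=0}=\wtilde u_0-\wtilde u_0'$, and these vanish on $B(x_0,r_1)$; Theorem \ref{th:local-u} then gives $w\equiv0$ on $\Gamma_1$. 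Letting $r_1\uparrow r_0$ (the conoids $\Gamma_1$ exhaust $\Gamma$), this produces a well-defined $u$ on $\Gamma$, and by Theorem \ref{th:A} and the regularity statement there, $u\in\mc{C}\bigl([0,t];\mc{A}(B_t)\bigr)$ for each $t$ with $\varrho(t)>0$; that $u$ solves $Lu=f$ on $\Gamma$ is clear since each $\wtilde u$ does, and $Lu$ is a local operator in $x$ (it only involves $\d_t$ and $\d_j$).

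For uniqueness on $\Gamma$: if $u^{(1)},u^{(2)}\in\mc{C}\bigl([0,t];\mc{A}(B_t)\bigr)$ both solve \eqref{eq:Cauchy} on $\Gamma$, their difference solves the homogeneous problem with zero data in $B_0$, so I would apply Theorem \ref{th:local-u} directly once I have extended the difference to a global solution — but here there is a subtlety, since a priori $u^{(1)}-u^{(2)}$ is only defined on $\Gamma$. The clean route is to run Lemma \ref{l:dependence}'s duality argument directly: for $t$ with $\varrho(t)>0$ and any test datum $w\in\mc{A}$ supported (as a functional-pairing partner) appropriately away from $\oline{B(x_0,\varrho(t)-\veps)}$, solve the adjoint problem $L^*v=0$ with $v(t)=w$; by Theorem \ref{th:speed} applied to $L^*$ (legitimate by Remark \ref{r:adj} and Lemma \ref{l:symm}), ${\rm supp}\,v(\tau)$ stays inside $\oline{B(x_0,r_0-\veps)}$ for $\tau\le t$, i.e. inside the spatial section of $\Gamma$, so pairing $u^{(1)}-u^{(2)}$ against $L^*v=0$ and integrating over $[0,t]$ is meaningful and gives $\langle u^{(1)}(t)-u^{(2)}(t),w\rangle=0$; letting $\veps\to0$ and varying $w$ gives $u^{(1)}(t)=u^{(2)}(t)$ on $B_t$.

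**Main obstacle.** The routine parts (existence via Theorem \ref{th:A}, solving $Lu=f$, continuity in time) are immediate. The genuine technical point is the extension/restriction bookkeeping: ensuring that the conoid $\Gamma$ built from $r_0$ is exactly exhausted by the sub-conoids $\Gamma_1$ on which Theorem \ref{th:local-u} applies, and that all pairings in the duality argument for uniqueness stay within regions where the relevant functionals and analytic functions are defined — this is exactly the kind of "give sense to the local Cauchy problem" care that the introduction alludes to. Once the supports of the adjoint solutions are controlled by Theorem \ref{th:speed}, everything closes; I expect no deeper difficulty, and this is why the paper can afford to present it as a corollary-style statement.
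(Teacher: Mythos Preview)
Your existence argument has a genuine gap at the very first step: you cannot produce a globally real-analytic $\wtilde u_0\in\mc{A}$ that \emph{agrees} with $u_0$ on a smaller ball $B(x_0,r_1)$. By the identity theorem, an analytic function on $\R^n$ that coincides with $u_0$ on an open set is the (unique) analytic continuation of $u_0$; so such a $\wtilde u_0$ exists only if $u_0$ already extends to all of $\R^n$, which is not assumed. Neither of your suggested fixes works: multiplying by a cut-off destroys analyticity (there are no nontrivial analytic functions of compact support), and Whitney-type extensions preserve $\mc C^\infty$ regularity, not real-analyticity. The same objection applies to $\wtilde f$.

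The paper avoids this by \emph{approximating} rather than extending: one uses the density of $\mc{A}$ in $\mc{A}(B_0)$ to pick sequences $u_0^k\to u_0$ in $\mc{A}(B_0)$ and $f^k\to f$ in $L^1\bigl([0,T];\mc{A}(B_0)\bigr)$ with $u_0^k\in\mc{A}$, $f^k\in L^1\bigl([0,T];\mc{A}\bigr)$. Theorem~\ref{th:A} then gives global solutions $u^k$, and Lemma~\ref{l:dependence} (continuous dependence), applied to the differences $u_0^k-u_0^h$ and $f^k-f^h$, shows $(u^k)_k$ is Cauchy in $\mc{C}\bigl([0,t];\mc{A}(B_t)\bigr)$; its limit is the sought solution on $\Gamma$. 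This is precisely the role of Lemma~\ref{l:dependence}, which your outline bypasses. Once you replace ``extend'' by ``approximate and pass to the limit via Lemma~\ref{l:dependence}'', the rest of your sketch (including the uniqueness part via the adjoint problem, which is essentially the content of Theorem~\ref{th:local-u}) lines up with the paper's proof.
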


\begin{proof}
We use the density of $\mc{A}$ in $\mc{A}(B_0)$ (see Appendix E of \cite{C-DG-S}). So, there exist sequences $\bigl(u_{0}^k\bigr)_k\subset\mc{A}$
and $\bigl(f^k\bigr)_k\subset L^1\bigl([0,T];\mc{A}\bigr)$ such that
$$
u_0^k\,\longrightarrow\,u_0\quad\mbox{ in }\;\mc{A}(B_0)\qquad\qquad\mbox{ and }\qquad\qquad
f^k\,\longrightarrow\,f\quad\mbox{ in }\;L^1\bigl([0,T];\mc{A}(B_0)\bigr)\,.
$$

For any $k\in\N$, let $u^k\,\in\,\mc{C}\bigl([0,T];\mc{A}\bigr)$ be the solution to problem \eqref{eq:Cauchy}, related to the data $u_0^k$ and $f^k$.
Recall that, fixed $k$, the existence of such a $u^k$ is guaranteed by Theorem \ref{th:A} here above.

On the other hand, if we apply Lemma \ref{l:dependence} to the families $\bigl(u_0^k-u_0^h\bigl)_{h,k}$ and $\bigl(f^k-f^h\bigl)_{h,k}$,
we immediately discover that the sequence of $\bigl(u^k\bigr)_k$ is a Cauchy sequence in $\mc{C}\bigl([0,T];\mc{A}(B_t)\bigr)$,
and hence it converges to some $u$ in this space. By linearity of the system, it is easy to check that $u$ is indeed a solution
of our problem \eqref{eq:Cauchy} on the conoid $\Gamma$.

\medbreak
The uniqueness of such a solution is a straightforward consequence of Theorem \ref{th:local-u}.

As a matter of fact, let $u_1$ and $u_2$ are two solutions related to the same data $u_0$ and $f$, verifying the hypotheses of Theorem \ref{th:local-eu}.
Then, their difference $\delta u\,:=\,u_1-u_2$ belongs to $\mc{A}(B_t)$ (for any $t$ such that $\vrho(t)>0$), and hence also to $\mc{A}'$.

Moreover, by linearity of our system, $\delta u$ satisfies system \eqref{eq:Cauchy} with initial datum and external force being both identically $0$.
Then, from Theorem \ref{th:local-u} one gets $\delta u\,\equiv\,0$.
\end{proof}

{\small

 }

\end{document}